 \def\LaTeX{\leavevmode L\raise.42ex
   \hbox{\kern-.3em\size{\sf@size}{0pt}\selectfont A}\kern-.15em\TeX}
\newcommand{\BibTeX}{{\rm B\kern-.05em{\sc
i\kern-.025emb}\kern-.08em\TeX}}
\newtheorem{col}{Corollary}[section]
\newtheorem{thm}{Theorem}[section]
\newtheorem{defn}{Definition}
\theoremstyle{defn}
\newtheorem{remark}[thm]{Remark}
\newtheorem{lemma}[thm]{Lemma}
\numberwithin{equation}{section}
\begin{document}
\begin{center}
\title[Bernstein spaces, sampling and  interpolation  in Mellin Analysis]{Bernstein spaces, sampling, and  Riesz-Boas interpolation formulas in Mellin Analysis}

\maketitle

\author{Isaac Z. Pesenson }\footnote{ Department of  Mathematics, Temple University,
 Philadelphia,
PA 19122; pesenson@temple.edu  }

%
\end{center}

\begin{abstract}
The goal of the paper is to consider Bernstein-Mellin subspaces in the Lebesgue-Mellin spaces and establishing for functions in these subspaces   new sampling theorems and Riesz-Boas high-order interpolation formulas.  
\end{abstract}

\section{Introduction}
\label{sec:1}
In a series of interesting papers by C. Bardaro at al \cite{BBMS-1}-\cite{BBMS-8}, and also by P. Butzer and S. Jansche \cite{BJ-1}-\cite{BJ-3} authors developed in the framework of Mellin analysis analogs of such important topics as Sobolev spaces, Bernstein spaces, Bernstein inequality, Paley-Wiener theorem, Riesz-Boas interpolation formulas, different sampling results. Many of their results were obtained by using the notion of polar-analytic functions developed  in  \cite{BBMS-1}-\cite{BBMS-4}. 

The objective of the present paper is to present a very different approach to the same topics  based solely on the fact that the family of Mellin translations defined as 
\begin{equation}\label{U}
U_{c}(t)f(x)=e^{ct}f(e^{t}x),\>\>\>U_{c}(t+\tau)=U_{c}(t)U_{c}(\tau),\>\>\>c\in \mathbb{R},
\end{equation}
forms a one-parameter $C_{0}$-group of isometries in appropriate function spaces (see below).
As one can see, its infinitesimal generator is the operator
\begin{equation}
\frac{d}{dt}U_{c}(t)f(x)|_{t=0}=x\frac{d}{dx}f(x)+cf(x)=\Theta_{c}f(x).
\end{equation}
In this paper we are guided by our  abstract theory of  sampling and interpolation in Banach spaces which was developed in \cite{Pes15a}, \cite{Pes15b}.  At the same time, our approach is very specific  and direct and we are not using the language of one-parameter groups.  The fact that we consider a very concrete situation allows us to obtain  results which we did not have in our general development. All our results hold true for a general group of translations $U_{c}$ with any $c\in \mathbb{R}$. However, for the sake of simplicity we consider only the case $c=0$ and adapting notations $U_{0}=U,\>\Theta_{0}=\Theta$.

In section \ref{Bern-sp} we define analog of Bernstein spaces using Bernstein-type inequality for the operator $\Theta$. Our analog of the Paley-Wiener Theorem is Theorem \ref{PW}. In section \ref{sampling} we prove four sampling theorems: Theorem \ref{sampling1}-Theorem \ref{sampling4}.  Our formula (\ref{VT-1}), which is a generalization of the Valiron-Tschakaloff sampling theorem, looks exactly like one proved in \cite{BBMS-1}, Th. 6. Other three theorems 
in this section seems to be new.  They all  deal with the regularly spaced sampling points. In contrast, section \ref{irreg} contains two sampling theorems which are using irregularly spaced sampling points. One of these theorems is a generalization of a sampling theorem which belongs to J.R. Higgins \cite{Hig} and another one to C. Seip \cite{S}.

In section \ref{RB} we discuss what we call the Riesz-Boas interpolation formulas.
The famous Riesz interpolation formula \cite{R1}, \cite{R2}, \cite{Nik}  gives expression of the derivative of a trigonometric polynomial as a linear combination of its translates: 
\begin{equation}\label{Riesz}
\left(\frac{d}{dt}\right)P(t)=\frac{1}{4\pi}\sum_{k=1}^{2n}(-1)^{k+1}
\frac{1}{\sin^{2} \frac{2k-1}{4n}\pi}P\left(\frac{2k-1}{2n}\pi+t\right), \>\>\>t\in
\mathbb{T}.
\end{equation}
This formula was extended by Boas \cite{B}, \cite{B1}, (see also \cite{Akh}, \cite{Nik}, \cite{Schm}) to functions in the Bernstein class $\mathbf{B}_{\sigma}^{\infty}(\mathbb{R})$ in the following form 
\begin{equation}\label{Boas}
\left( \frac{d}{dt}\right)f(t)=\frac{n}{\pi^{2}}\sum_{k\in\mathbb{Z}}\frac{(-1)^{k-1}}{(k-1/2)^{2}}
f\left(\frac{\pi}{n}(k-1/2)+t\right), \>\>\>t\in \mathbb{R}.
\end{equation}
In turn, the formula (\ref{Boas}) was  extended in \cite{BSS2} to  higher powers   $(d/dt)^{m}$. Our objective in section \ref{RB}   is to obtain  similar formulas for $m\in \mathbb{N}$ where the operator $d/dt$ is replaced by the operator $\Theta=x\frac{d}{dx}$. When $m=1$ such formula for the operator $\Theta$ was established in \cite {BBMS-1}. 

Obviously, the goals of the present article are quite close to some of the objectives of the papers  \cite{BBMS-1}-\cite{BBMS-8}. It would be very interesting and instructive      
to do a rigorous comparison of our approaches  and outcomes. However, the fact that  our papers are based on a rather different ideas makes a such comparison not easy. A serious juxtaposition of our treatments  would  require  substantial increase of the length of the present article. We are planing to do such analysis in a separate paper.

\section{Bernstein spaces}\label{Bern-sp}
\subsection{Mellin translations}

For $p \in  [1, \infty[$, denote by $\| \cdot \|_{p} $ the norm of the Lebesgue space $L ^{p}(\mathbb{R}_{+}).$ In Mellin analysis, the analogue of $L ^{p }(\mathbb{R}_{+} )$ are the spaces $X^{p}(\mathbb{R}_{+})$ comprising all functions $f: \mathbb{R}_{+} \mapsto \mathbb{ C}$  such that $f(\cdot)(\cdot)^{-1/p}\in  L ^{p}(\mathbb{R}_{+}) $ with the norm $\|f\|_{X^{p}(\mathbb{R}_{+}) }:= \| f (\cdot)(\cdot)^{-1/p}\|_{p}$. Furthermore, for $p = \infty$, we define $X ^{\infty}$ as the space of all measurable functions $f : \mathbb{R}_{+} \mapsto\mathbb{C}$ 
such that $\|f\|_{X^{\infty}}:=\sup_{x>0}|f(x)|<\infty$.

In  spaces  $X^{p}(\mathbb{R}_{+})$ we consider the one-parameter $C_{0}$-group of operators 
  $U(t),\>t\in \mathbb{R},$ where
\begin{equation}\label{U}
U(t)f(x)=f(e^{t}x),\>\>\>U(t+\tau)=U(t)U(\tau),
\end{equation}
whose infinitesimal generator is 
\begin{equation}
\frac{d}{dt}U(t)f(x )|_{t=0}=x\frac{d}{dx}f(x)=\Theta f(x).
\end{equation}
The domain of its power $k\in \mathbb{N}$ is denoted as $\mathcal{D}^{k}(\Theta)$ and defined  as the set of all functions $f\in X^{p}(\mathbb{R}_{+}),\>1\leq p\leq \infty$, such that $\Theta^{k}f\in X^{p}(\mathbb{R}_{+})$. The domains $\mathcal{D}^{k}(\Theta),\>\>k\in \mathbb{N}, $ can be treated as analogs of the Sobolev spaces.
The general theory of one-parameter semi-groups of class $C_{0}$ (see \cite{BB}, \cite{K}) implies  that the operator $\Theta$ is closed in $X^{p}(\mathbb{R}_{+})$  and the set $\mathcal{D}^{\infty}(\Theta)=\cap_{k}\mathcal{D}^{k}(\Theta)$ is dense in $X^{p}(\mathbb{R}_{+})$.
By using the following formula  (see \cite{BJ-1}, p. 355)
\begin{equation}
\Theta^{k}f(x)=\sum_{r=0}^{k}S(k, r)x^{r}f^{(r)}(x),
\end{equation}
$S(k,r)$ being Stirling numbers of the second kind, one can give more explicit description of the Mellin-Sobolev spaces (see  \cite{BJ-1}, p. 357) in the spirit of the Bochner's definition of the classical one-dimensional Sobolev spaces.

\subsection{Bernstein spaces}

Let's remind that in the classical analysis a  Bernstein class \cite{Akh}, \cite{Nik}, which is denoted as ${\bf B}_{\sigma}^{p}(\mathbb{R}),\>\> \sigma>0, \>\>1\leq p\leq \infty,$ is a linear space of all functions $f:\mathbb{R} \mapsto \mathbb{C}$ which belong to $L^{p}(\mathbb{R})$ and admit  extension to $\mathbb{C}$ as entire functions of exponential type $\sigma$.
A function $f$ belongs to ${\bf B}_{\sigma}^{p}(\mathbb{R})$ if and only if the following Bernstein inequality holds 
$$
\|f^{(k)}\|_{L^{p}(\mathbb{R})}\leq \sigma^{k}\|f\|_{L^{p}(\mathbb{R})},
$$
for all natural $k$. 
Using the distributional Fourier transform 
$$
\widehat{f}(\xi)=\frac{1}{\sqrt{2\pi}}  \int_{\mathbb{R}} f(x)e^{-i\xi x}dx, \>\>\>f\in L^{p}(\mathbb{R}), \>\>1\leq p\leq \infty,
$$
one can show (Paley-Wiener theorem) that $f\in {\bf B}_{\sigma}^{p}(\mathbb{R}), \>\>1\leq p\leq \infty,$ if and only if $f\in L^{p}(\mathbb{R}),\>\>1\leq p\leq \infty,$ and the support of $\widehat{f}$ (in sens of distributions) is in $[-\sigma, \sigma]$.

\begin{defn}
The Bernstein space subspace
 ${\bf B}_{\sigma}^{p}(\Theta), \>\>\sigma>0,\>1\leq p\leq \infty,$ is defined as  a set of all functions $f$ in $X^{p}(\mathbb{R}_{+})$ 
 which belong to $\mathcal{D}^{\infty}(\Theta)$  and for which 
\begin{equation}\label{Bern}
\|\Theta^{k}f\|_{X^{p}(\mathbb{R}_{+})}\leq \sigma^{k}\|f\|_{X^{p}(\mathbb{R}_{+})}, \>\>k\in \mathbb{N}.
\end{equation}
\end{defn}

\begin{thm}\label{basic}

A function $f\in \mathcal{D}^{\infty}(\Theta)$ belongs to  ${\bf B}_{\sigma}^{p}(\Theta), \>\>\sigma> 0 ,\>1\leq p\leq \infty,$ if and only if
 the quantity
\begin{equation}\label{Lem}
\sup_{k\in N} \sigma ^{-k}\|\Theta^{k}f\|_{X^{p}(\mathbb{R}_{+})}=R(f,\sigma)
\end{equation}
 is finite.
\end{thm}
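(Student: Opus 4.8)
The plan is to treat the two implications separately, with the forward implication being essentially a tautology and the reverse one carrying all the content. If $f \in {\bf B}_{\sigma}^{p}(\Theta)$, then the defining inequality (\ref{Bern}) gives $\sigma^{-k}\|\Theta^{k}f\|_{X^{p}(\mathbb{R}_{+})} \leq \|f\|_{X^{p}(\mathbb{R}_{+})}$ for every $k$, so $R(f,\sigma) \leq \|f\|_{X^{p}(\mathbb{R}_{+})} < \infty$ at once. The work lies in the converse: assuming only that $\|\Theta^{k}f\|_{X^{p}(\mathbb{R}_{+})} \leq M\sigma^{k}$ for all $k$ with $M = R(f,\sigma) < \infty$ --- a bound with an a priori arbitrary constant $M$ --- I must upgrade it to the sharp Bernstein inequality with constant exactly $\|f\|_{X^{p}(\mathbb{R}_{+})}$.

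First I would exploit the growth bound to show that $f$ is an entire vector for $\Theta$. Define the $X^{p}$-valued power series
\[
F(z) = \sum_{k=0}^{\infty}\frac{z^{k}}{k!}\Theta^{k}f, \qquad z \in \mathbb{C}.
\]
The estimate $\|\Theta^{k}f\|_{X^{p}(\mathbb{R}_{+})} \leq M\sigma^{k}$ forces $\|F(z)\|_{X^{p}(\mathbb{R}_{+})} \leq M e^{\sigma|z|}$, so the series converges for every $z$ and $F$ is an entire $X^{p}$-valued function of exponential type at most $\sigma$. The key structural step is to identify $F$ on the real axis with the group orbit: writing Taylor's formula with integral remainder for the $C_{0}$-group,
\[
U(t)f = \sum_{k=0}^{n-1}\frac{t^{k}}{k!}\Theta^{k}f + \frac{1}{(n-1)!}\int_{0}^{t}(t-s)^{n-1}U(s)\Theta^{n}f\,ds,
\]
and using that $U(s)$ is an isometry, the remainder is bounded in norm by $M(\sigma|t|)^{n}/n!$, which tends to $0$ as $n \to \infty$. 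Hence $F(t) = U(t)f$ for all real $t$, and the isometry property yields $\|F(t)\|_{X^{p}(\mathbb{R}_{+})} = \|f\|_{X^{p}(\mathbb{R}_{+})}$ for every $t \in \mathbb{R}$.

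The final step is to pass from this vector-valued entire function of bounded orbit to the sharp inequality by duality. For any functional $\phi$ in the dual of $X^{p}(\mathbb{R}_{+})$ with $\|\phi\| \leq 1$, the scalar function $g_{\phi}(z) = \phi(F(z))$ is entire of exponential type at most $\sigma$ and satisfies $|g_{\phi}(t)| \leq \|F(t)\|_{X^{p}(\mathbb{R}_{+})} = \|f\|_{X^{p}(\mathbb{R}_{+})}$ on the real line. The classical Bernstein inequality for scalar entire functions of exponential type then gives $|g_{\phi}^{(k)}(t)| \leq \sigma^{k}\|f\|_{X^{p}(\mathbb{R}_{+})}$ for all $t$ and all $k$, by iterating the first-order inequality since each derivative is again of type $\sigma$ and bounded on $\mathbb{R}$. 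Evaluating at $t=0$ and using $g_{\phi}^{(k)}(0) = \phi(\Theta^{k}f)$ gives $|\phi(\Theta^{k}f)| \leq \sigma^{k}\|f\|_{X^{p}(\mathbb{R}_{+})}$; taking the supremum over all such $\phi$ and invoking the Hahn--Banach norming property $\|v\| = \sup_{\|\phi\|\leq 1}|\phi(v)|$ produces the sharp bound $\|\Theta^{k}f\|_{X^{p}(\mathbb{R}_{+})} \leq \sigma^{k}\|f\|_{X^{p}(\mathbb{R}_{+})}$, i.e.\ $f \in {\bf B}_{\sigma}^{p}(\Theta)$.

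I expect the main obstacle to be the orbit-identification step $F(t) = U(t)f$: it is precisely what converts the crude constant $M$ into the isometry-controlled value $\|f\|_{X^{p}(\mathbb{R}_{+})}$, and it relies on the entire-vector structure supplied by the hypothesis together with the $C_{0}$-group machinery. The duality passage, by contrast, is robust and works uniformly in $p$, including $p=\infty$, because the norming functionals from Hahn--Banach exist regardless of any reflexivity or explicit description of the dual of $X^{p}(\mathbb{R}_{+})$.
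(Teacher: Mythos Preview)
Your proof is correct and follows essentially the same strategy as the paper: reduce to the scalar case by pairing with a dual element, obtain an entire function of exponential type $\sigma$ that is bounded on the real line (via the isometry of $U(t)$), and then apply the classical scalar Bernstein inequality, recovering the sharp constant by taking the supremum over norming functionals. The only cosmetic difference is that you first build the vector-valued entire function $F(z)=\sum_{k}z^{k}\Theta^{k}f/k!$ and then apply functionals, whereas the paper applies the pairing with $h\in X^{q}(\mathbb{R}_{+})$ from the outset and works directly with the scalar function $\Phi(t)=\int_{\mathbb{R}_{+}} f(e^{t}x)h(x)\,dx/x$.
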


\begin{proof} It is evident that if  $f\in {\bf B}_{\sigma}^{p}(\Theta)$ then (\ref{Lem}) holds.
Next, for an $h\in X^{q}(\mathbb{R}_{+}),\>1/p+1/q=1,$ consider a scalar-valued function 
$$
\Phi(t)=\int _{\mathbb{R}_{+}}f(e^{t}x)h(x)\frac{dx}{x}.
$$
We note that
$$
\left(x\frac{d}{dx}\right)^{k}f(x)=\left(\frac{d}{dt}\right)^{k}f(e^{t}x)|_{t=0},
$$
and since $f\in \mathcal{D}^{\infty}(\Theta)$ we conclude that $f(e^{t}x)$ is infinitely differentiable at $t=0$.
Now we have that 
\begin{equation}\label{ser}
\Phi(t)=\sum_{k=0}^{\infty}\frac{1}{k!}t^{k}\left(\frac{d}{dt}\right)^{k}\Phi(0)= \sum_{k=0}^{\infty}\frac{1}{k!}t^{k}\int _{\mathbb{R}_{+}}\left(\frac{d}{dt}\right)^{k}f(e^{t}x)|_{t=0}h(x)\frac{dx}{x}=
$$
$$
\sum_{k=0}^{\infty}\frac{1}{k!}t^{k}\int _{\mathbb{R}_{+}}\left(x\frac{d}{dx}\right)^{k}f(x)h(x)\frac{dx}{x}.
\end{equation}
This series is absolutely convergent since by the assumption
\begin{equation}\label{estim}
\left|\Phi(t)\right|\leq \sum_{k=0}^{\infty}\frac{1}{k!}t^{k}\left|\int _{\mathbb{R}_{+}}\Theta^{k}f(x)h(x)\frac{dx}{x}\right|\leq  
$$
$$
R(f,\sigma)\sum_{k=0}^{\infty}\frac{1}{k!}t^{k}\sigma^{k}\|f\|_{X^{p}(\mathbb{R}_{+})}\|h\|_{X^{q}(\mathbb{R}_{+})}= R(f,\sigma)\|f\|_{X^{p}(\mathbb{R}_{+})}\|h\|_{X^{q}(\mathbb{R}_{+})}e^{\sigma t}.
\end{equation}
It implies that $\Phi$ can be extended to the complex plane $\mathbb{C}$ by using its Taylor series (\ref{ser}). Moreover, as the estimate (\ref{estim}) shows  the inequality 
$$
 \left|\Phi(z)\right|\leq R(f,\sigma)\|f\|_{X^{p}(\mathbb{R}_{+})}\|h\|_{X^{q}(\mathbb{R}_{+})}e^{\sigma |z|},\>\>z\in \mathbb{C},
$$
will hold. In addition, $\Phi$ is bounded on the real line by the constant $\|f\|_{X^{p}(\mathbb{R}_{+})}\|h\|_{X^{q}(\mathbb{R}_{+})}$. In other words, we proved \textit{ that if $f\in {\bf B}_{\sigma}^{p}(\Theta),\>h\in X^{q}(\mathbb{R}_{+}),\>1/p+1/q=1,$ then $\Phi $ belongs to the regular Bernstein space $\mathbf{B}_{\sigma}^{\infty}(\mathbb{R})$. }This fact allows to apply to $\Phi$ the classical  Bernstein inequality in the space $C(\mathbb{R})$ of continuous functions on $\mathbb{R}$ with the uniform norm: 
$$
\left|\left(\frac{d}{dt}\right)^{k}\Phi(0)\right|\leq\sup_{t}\left|\left(\frac{d}{dt}\right)^{k}\Phi(t)\right|\leq \sigma^{k}\sup_{t}\left|\Phi(t)\right|.
$$
Since
$$
\left(\frac{d}{dt}\right)^{k}\Phi(0)=\int _{\mathbb{R}_{+}}\left(\frac{d}{dt}\right)^{k}f(e^{t}x)|_{t=0}h(x)\frac{dx}{x}=\int _{\mathbb{R}_{+}}\Theta^{k}f(x)h(x)\frac{dx}{x}
$$
we obtain
$$
\left|  \int _{\mathbb{R}_{+}}\Theta^{k}f(x)h(x)\frac{dx}{x} \right|\leq \sigma^{k}\|f\|_{X^{p}(\mathbb{R}_{+})}\|h\|_{X^{q}(\mathbb{R}_{+})}
$$
Choosing $h$ such that $\|h\|_{X^{p}(\mathbb{R}_{+})}=1$ and 
$$
\int _{\mathbb{R}_{+}}\Theta^{k}f(x)h(x)\frac{dx}{x}=\|\Theta^{k}f\|_{X^{p}(\mathbb{R}_{+})}
$$ 
we obtain
the inequality 
$$
\|\Theta^{k}f\|_{X^{p}(\mathbb{R}_{+})}\leq \sigma ^{k} \|f\|_{X^{p}(\mathbb{R}_{+})}, \>\>k\in \mathbb{N}.
 $$
   Theorem  is proved.
\end{proof}

The following analog of the Paley-Wiener Theorem follows from the proof of the previous theorem.
 
 \begin{thm}\label{PW}
The following conditions are equivalent:

\begin{enumerate}

\item  $f$ belongs to ${\bf B}_{\sigma}^{p}(\Theta),\>1\leq p\leq \infty$;

\item for every $g\in X^{q}(\mathbb{R}_{+}),\>1/p+1/q=1,$ the function 
$$
\Phi(z)=\int_{\mathbb{R}_{+}}f(e^{z}x)g(x)\frac{dx}{x},\>\>z\in \mathbb{C},
$$
belongs to the regular space  $\mathbf{B}_{\sigma}^{\infty}(\mathbb{R})$, i.e. it is an entire function of exponential type $\sigma$ which is bounded on the real line.
\end{enumerate}

\end{thm}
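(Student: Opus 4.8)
The plan is to harvest both implications directly from the proof of Theorem \ref{basic}, because the scalar function $\Phi$ constructed there is exactly the object named in condition (2). Throughout I would keep $f \in \mathcal{D}^{\infty}(\Theta)$ as the ambient smoothness hypothesis, so that the Taylor expansion (\ref{ser}) is legitimate and the identity $\Phi^{(k)}(0) = \int_{\mathbb{R}_{+}} \Theta^{k} f(x) g(x)\frac{dx}{x}$ holds for every $g \in X^{q}(\mathbb{R}_{+})$, $1/p + 1/q = 1$.

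For the implication (1) $\Rightarrow$ (2) there is essentially nothing new to do: it is the italicized conclusion already obtained inside the proof of Theorem \ref{basic}. First I would recall that if $f \in \mathbf{B}_{\sigma}^{p}(\Theta)$ then the estimate (\ref{estim}) shows the series (\ref{ser}) converges on all of $\mathbb{C}$ and defines an entire extension of $\Phi$ satisfying $|\Phi(z)| \leq R(f,\sigma)\|f\|_{X^{p}(\mathbb{R}_{+})}\|g\|_{X^{q}(\mathbb{R}_{+})}e^{\sigma|z|}$, while on the real axis $|\Phi(t)| \leq \|f\|_{X^{p}(\mathbb{R}_{+})}\|g\|_{X^{q}(\mathbb{R}_{+})}$. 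Being entire of exponential type $\sigma$ and bounded on $\mathbb{R}$ is precisely membership in $\mathbf{B}_{\sigma}^{\infty}(\mathbb{R})$.

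For the converse (2) $\Rightarrow$ (1) I would run the second half of the proof of Theorem \ref{basic} in reverse. Fix $g \in X^{q}(\mathbb{R}_{+})$ with $\|g\|_{X^{q}(\mathbb{R}_{+})} = 1$. Since $\Phi \in \mathbf{B}_{\sigma}^{\infty}(\mathbb{R})$, the classical Bernstein inequality in $C(\mathbb{R})$ yields $|\Phi^{(k)}(0)| \leq \sigma^{k}\sup_{t}|\Phi(t)|$, and boundedness on the real line gives $\sup_{t}|\Phi(t)| \leq \|f\|_{X^{p}(\mathbb{R}_{+})}$. Inserting $\Phi^{(k)}(0) = \int_{\mathbb{R}_{+}} \Theta^{k} f(x) g(x)\frac{dx}{x}$ produces $\left|\int_{\mathbb{R}_{+}} \Theta^{k} f(x) g(x)\frac{dx}{x}\right| \leq \sigma^{k}\|f\|_{X^{p}(\mathbb{R}_{+})}$ for every unit vector $g$. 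Finally I would invoke the isometry $f \mapsto f(\cdot)(\cdot)^{-1/p}$ of $X^{p}(\mathbb{R}_{+})$ onto $L^{p}(\mathbb{R}_{+})$, under which the Mellin pairing $\int_{\mathbb{R}_{+}} (\cdot)(\cdot)\frac{dx}{x}$ becomes the ordinary $L^{p}$--$L^{q}$ pairing; taking the supremum over such $g$ then recovers $\|\Theta^{k} f\|_{X^{p}(\mathbb{R}_{+})}$ and delivers the Bernstein inequality (\ref{Bern}), i.e. $f \in \mathbf{B}_{\sigma}^{p}(\Theta)$.

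The hard part will be the duality step at the endpoints. For $1 < p < \infty$ the equality realizing $\|\Theta^{k}f\|_{X^{p}(\mathbb{R}_{+})}$ as the supremum of $|\int \Theta^{k}f\,g\,dx/x|$ over unit vectors $g \in X^{q}(\mathbb{R}_{+})$ is immediate from the $L^{p}$ duality transported by the isometry, but for $p = 1$ and $p = \infty$ one must recover the norm as a supremum against $L^{\infty}$ (resp. $L^{1}$) test functions, with attainment of the extremal $g$ replaced by approximation; this is exactly the choice-of-$h$ maneuver already performed at the close of the proof of Theorem \ref{basic}. A second subtlety I would flag explicitly is that the stated equivalence presupposes $f \in \mathcal{D}^{\infty}(\Theta)$: if one weakened the hypothesis to merely $f \in X^{p}(\mathbb{R}_{+})$, then deducing the smoothness of $f$ from the mere weak entireness of all the functions $\Phi$ --- that is, passing from weak to strong analyticity of the orbit $t \mapsto U(t)f$ --- would become the genuine obstacle, and would require the analytic-vector machinery rather than the elementary argument above.
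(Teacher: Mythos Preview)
Your proposal is correct and follows exactly the route the paper intends: the paper's own ``proof'' of Theorem~\ref{PW} is simply the remark that it follows from the proof of Theorem~\ref{basic}, and you have unpacked precisely that argument, including the italicized conclusion for $(1)\Rightarrow(2)$ and the Bernstein-plus-duality step for $(2)\Rightarrow(1)$. Your added remarks on the endpoint duality and on the implicit hypothesis $f\in\mathcal{D}^{\infty}(\Theta)$ are accurate refinements that the paper leaves unstated.
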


\section{Sampling theorems in Mellin analysis}\label{sampling}

\subsection{A week Shannon type sampling theorem}

Below we are going to use the following known fact (see  \cite{BSS1}, p. 46).

\begin{thm}\label{Shannon-1}

If 
 $h\in {\bf B}_{\sigma}^{\infty}(\mathbb{R})$, then for any $0<\gamma<1$ the following formula holds 

\begin{equation}
h(z)=\sum_{k\in \mathbb{Z}} h\left(\gamma\frac{k\pi}{\sigma}\right)\>
sinc\left(\gamma^{-1}\frac{\sigma}{\pi} z-k\right),\>\>\>z\in \mathbb{C},
\end{equation} 
where the series converges uniformly on compact subsets of $\mathbb{C}$.
\end{thm}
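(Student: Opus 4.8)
The plan is to reduce the statement to the classical cardinal-series representation for entire functions of exponential type strictly less than $\pi$, and to produce the reconstruction formula by a residue computation whose convergence is guaranteed precisely by the oversampling hypothesis $0<\gamma<1$. First I would set $\sigma'=\sigma/\gamma$, so that $\sigma'>\sigma$, and note that the nodes $t_{k}=\gamma k\pi/\sigma=k\pi/\sigma'$ are exactly the zeros of $\sin(\sigma' w)$. After the change of variable $w\mapsto \pi w/\sigma'$ the problem becomes the normalized one, namely that $g(w)=h(\pi w/\sigma')$ is entire of exponential type $\sigma\pi/\sigma'=\gamma\pi<\pi$ and bounded on $\mathbb{R}$, and one must reconstruct it from its values at the integers; thus the whole statement rests on the cardinal-series theorem for the class $\mathbf{B}_{\gamma\pi}^{\infty}(\mathbb{R})$.

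Next I would produce the formula by contour integration. Recall that a function of exponential type $\sigma$ bounded on $\mathbb{R}$ satisfies $|h(w)|\le C\,e^{\sigma|\mathrm{Im}\,w|}$ by Phragm\'en--Lindel\"of. For a fixed $z$ that is not a node, consider
\[
G(w)=\frac{h(w)}{\sin(\sigma' w)\,(w-z)}.
\]
Its poles are simple: at $w=t_{k}$ with residue $(-1)^{k}h(t_{k})/(\sigma'(t_{k}-z))$, and at $w=z$ with residue $h(z)/\sin(\sigma' z)$. Using $\cos(\sigma' t_{k})=(-1)^{k}$ and $\sin(\sigma' t_{k})=0$ one verifies the identity
\[
h(t_{k})\,\mathrm{sinc}\!\left(\gamma^{-1}\frac{\sigma}{\pi}z-k\right)=-\sin(\sigma' z)\,\mathrm{Res}_{w=t_{k}}\,G(w),
\]
while $-\sin(\sigma' z)\,\mathrm{Res}_{w=z}\,G(w)=-h(z)$. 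Hence, once the integral of $G$ over a suitable family of expanding contours is shown to vanish, the residue theorem forces the sum of all residues to be zero, and multiplying by $-\sin(\sigma' z)$ turns this into the desired reconstruction formula.

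The main obstacle is exactly this convergence step, and it is where $\gamma<1$ is indispensable. I would take rectangular contours $\Gamma_{N}$ whose vertical sides sit midway between consecutive nodes, where $|\sin(\sigma' w)|=\cosh(\sigma'\,\mathrm{Im}\,w)\ge \tfrac12 e^{\sigma'|\mathrm{Im}\,w|}$, and whose horizontal sides recede to $\pm i\infty$, where $|\sin(\sigma' w)|\asymp e^{\sigma'|\mathrm{Im}\,w|}$. In both cases the type bound on $h$ pitted against $\sin(\sigma' w)$ produces the factor $e^{(\sigma-\sigma')|\mathrm{Im}\,w|}$, and the spectral gap $\sigma'-\sigma=\sigma(1-\gamma)/\gamma>0$ supplies genuine exponential decay; combined with the $1/(w-z)$ factor this makes the integral over each side tend to zero as $N\to\infty$, the boundedness (rather than decay) of $h$ on $\mathbb{R}$ causing no trouble since $\Gamma_{N}$ stays off the real axis. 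The same gap gives a tail estimate $\sum_{|k|>N}$ that is uniform for $z$ in a compact set, which yields the asserted uniform convergence on compacta of $\mathbb{C}$. Equivalently, after the rescaling of the first step one may simply invoke the known cardinal-series theorem for type $<\pi$.
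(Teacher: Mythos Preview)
The paper does not give its own proof of this theorem: it is stated as a ``known fact'' with a citation to Butzer--Splettst\"o{\ss}er--Stens \cite{BSS1}, p.~46, and is then used as a black box. Your proposal therefore goes beyond what the paper does by supplying an actual argument, namely the classical residue proof of the oversampled cardinal series for $\mathbf{B}_{\sigma}^{\infty}(\mathbb{R})$. The reduction to type $\gamma\pi<\pi$ via rescaling, the contour integration of $h(w)/\bigl(\sin(\sigma' w)(w-z)\bigr)$, and the use of the spectral gap $\sigma'-\sigma=\sigma(1-\gamma)/\gamma>0$ together with the Phragm\'en--Lindel\"of bound are the standard and correct ingredients; this is indeed one of the routes by which the cited result is proved.

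Two small points of presentation. First, your remark that ``$\Gamma_{N}$ stays off the real axis'' is not literally true: the vertical sides of the rectangle do cross $\mathbb{R}$. This is harmless, because on those lines $|\sin(\sigma' w)|=\cosh(\sigma'\,\mathrm{Im}\,w)\ge 1$, so the estimate holds uniformly including at $\mathrm{Im}\,w=0$; the exponential gap is only needed for the horizontal sides and for integrability along the full vertical line. Second, the uniform convergence on compacta of $\mathbb{C}$ is most cleanly obtained not by a separate tail bound on the series (direct term-by-term estimates give only $O(1/|k|)$, which is not summable) but by observing that the difference between $h(z)$ and the $N$-th partial sum equals, up to the factor $-\sin(\sigma' z)$, the contour integral over $\Gamma_{N}$, whose estimate is uniform for $z$ in a compact set.
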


By using Theorem \ref{Shannon-1}  we obtain 
our First "Weak" Sampling Theorem. 

\begin{thm} \label{sampling1}If $f\in \mathbf{B}_{\sigma}^{p}(\Theta),\>\>1\leq p\leq\infty$ then  for all $g\in L^{q}(\Theta), \>\>1/p+1/q=1,$ and all $0<\gamma<1$ the following formula holds

\begin{equation}\label{1-1}
\int_{\mathbb{R}_{+}} f(\tau x)g(x)\frac{dx}{x}=
$$
$$
\sum_{k\in \mathbb{Z}}\left(\int_{\mathbb{R}_{+}} f(e^{\gamma k\pi/\sigma}x)g(x)\frac{dx}{x}\right)sinc\left(\gamma^{-1}\frac{\sigma}{\pi} \ln \tau -k\right),\>\>\>\tau\in \mathbb{R}_{+},
\end{equation}

where the series converges uniformly on compact subsets of $\mathbb{R}_{+}$. 

\end{thm}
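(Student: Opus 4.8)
The plan is to reduce the entire statement to the scalar auxiliary function $\Phi$ introduced in Theorem \ref{PW}, and then invoke the classical Shannon-type expansion of Theorem \ref{Shannon-1} together with the substitution $\tau = e^{w}$. First I would fix $f\in\mathbf{B}_{\sigma}^{p}(\Theta)$ and $g\in X^{q}(\mathbb{R}_{+})$ with $1/p+1/q=1$, and set
$$
\Phi(z)=\int_{\mathbb{R}_{+}}f(e^{z}x)g(x)\frac{dx}{x},\qquad z\in\mathbb{C}.
$$
By Theorem \ref{PW} this $\Phi$ lies in the regular Bernstein space $\mathbf{B}_{\sigma}^{\infty}(\mathbb{R})$; that is, $\Phi$ is entire of exponential type $\sigma$ and bounded on the real line. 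Consequently Theorem \ref{Shannon-1} applies to $\Phi$ verbatim: for every $0<\gamma<1$,
$$
\Phi(w)=\sum_{k\in\mathbb{Z}}\Phi\left(\gamma\frac{k\pi}{\sigma}\right) sinc\left(\gamma^{-1}\frac{\sigma}{\pi}w-k\right),\qquad w\in\mathbb{C},
$$
with uniform convergence on compact subsets of $\mathbb{C}$.

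The second step is a change of variable on the sampling nodes. Writing an arbitrary $\tau\in\mathbb{R}_{+}$ as $\tau=e^{w}$ with $w=\ln\tau\in\mathbb{R}$, I would observe that
$$
\Phi(\ln\tau)=\int_{\mathbb{R}_{+}}f(\tau x)g(x)\frac{dx}{x},\qquad \Phi\left(\gamma\frac{k\pi}{\sigma}\right)=\int_{\mathbb{R}_{+}}f(e^{\gamma k\pi/\sigma}x)g(x)\frac{dx}{x}.
$$
Substituting $w=\ln\tau$ into the Shannon expansion above then produces exactly formula (\ref{1-1}). For the convergence claim I would note that $\tau\mapsto\ln\tau$ carries compact subsets of $\mathbb{R}_{+}$ onto compact subsets of $\mathbb{R}\subset\mathbb{C}$, so the uniform convergence on compacta in $\mathbb{C}$ guaranteed by Theorem \ref{Shannon-1} transfers directly to uniform convergence on compacta in $\mathbb{R}_{+}$.

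I do not anticipate any substantive obstacle here, since the genuine content has already been packaged into the two cited results: Theorem \ref{PW} converts the Bernstein--Mellin membership of $f$ into ordinary exponential-type behavior of $\Phi$, and Theorem \ref{Shannon-1} supplies the classical interpolation formula. The only point requiring a moment's care is the bookkeeping of the dictionary $w\leftrightarrow\ln\tau$: one must verify that the equally spaced nodes $\gamma k\pi/\sigma$ on the $w$-line correspond under $\tau=e^{w}$ to the multiplicatively spaced nodes $e^{\gamma k\pi/\sigma}$ appearing in the Mellin formula, and that $sinc(\gamma^{-1}\tfrac{\sigma}{\pi}\ln\tau-k)$ is indeed the resulting reproducing coefficient. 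Once this correspondence is recorded, the identity and its convergence are immediate.
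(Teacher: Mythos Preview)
Your proposal is correct and follows essentially the same route as the paper's own proof: invoke Theorem~\ref{PW} to place $\Phi$ in $\mathbf{B}_{\sigma}^{\infty}(\mathbb{R})$, apply Theorem~\ref{Shannon-1}, and then substitute $t=\ln\tau$. The only addition you make beyond the paper is the explicit remark that $\ln$ carries compacta in $\mathbb{R}_{+}$ to compacta in $\mathbb{R}$, which justifies the transfer of uniform convergence; this is implicit in the paper's argument.
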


\begin{proof} According to Theorem \ref{PW} for any 
 $f\in \mathbf{B}_{\sigma}^{p}(\Theta),\>\>1\leq p\leq \infty$ and any $g\in X^{q}(\mathbb{R}_{+}),\>1/p+1/q=1,$  the function 
\begin{equation}\label{Phi}
\Phi(t)=\int_{\mathbb{R}_{+}} f(e^{t}x)g(x)\frac{dx}{x},\>\>\>t\in \mathbb{R}.
\end{equation}
belongs to $ \mathbf{B}_{\sigma}^{\infty}(\mathbb{R})  $. 
Applying Theorem \ref{Shannon-1} we obtain
\begin{equation}\label{1-2}
\int_{\mathbb{R}_{+}} f(e^{t}x)g(x)\frac{dx}{x}=
$$
$$
\sum_{k\in \mathbb{Z}}\left(\int_{\mathbb{R}_{+}} f\left(e^{\gamma k\pi/\sigma}x\right)g(x)\frac{dx}{x}\right)sinc\left(\gamma^{-1}\frac{\sigma}{\pi} t -k\right),\>\>\>t\in \mathbb{R},
\end{equation}
where the series converges uniformly on compact subsets of $\mathbb{R}$. Setting $\tau=e^{t}$ or $t=\ln \tau$ gives for we obtain: for any $f\in \mathbf{B}_{\sigma}^{p}(\Theta), g\in X^{q}(\mathbb{R}_{+}),\>1/p+1/q=1,$
\begin{equation}\label{1-4}
\int_{\mathbb{R}_{+}} f(\tau x)g(x)\frac{dx}{x}=
$$
$$
\sum_{k\in \mathbb{Z}}\int_{\mathbb{R}_{+}} f\left(e^{\gamma k\pi/\sigma}x\right)g(x)\frac{dx}{x}sinc\left(\gamma^{-1}\frac{\sigma}{\pi} \ln \tau -k\right),\>\>\>\tau\in \mathbb{R}_{+},
\end{equation}
where the series converges uniformly on compact subsets of $\mathbb{R}_{+}$.

Theorem is proved. 
\end{proof}

\subsection{A sampling formula for Mellin convolution} Note, that according to \cite{BJ-1} the Mellin convolution is defined as
$$
F\ast_{\mathcal{M}} G(z)=\int_{\mathbb{R}_{+}} F\left(\frac{z}{u}\right)\overline{G(u)}\frac{du}{u}.
$$
\begin{thm}\label{sampling2}
For any  $f\in \mathbf{B}_{\sigma}^{p}(\Theta)$ and $h\in  X^{q}(\mathbb{R}_{+}),\>\>1/p+1/q=1, \>1<p<\infty$.
the following formula holds
\begin{equation}\label{conv-M}
f\ast_{\mathcal{M}}  h(\tau)=\sum_{k\in \mathbb{Z}}f\ast_{\mathcal{M}}  h\left(\frac{\gamma k\pi}{\sigma}\right)sinc\left(\gamma^{-1}\frac{\sigma}{\pi} \ln \tau -k\right),\>\>\>\tau\in \mathbb{R}_{+},
\end{equation}
where the series converges uniformly on compact subsets of $\mathbb{R}_{+}$.
\end{thm}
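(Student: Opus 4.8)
The plan is to reduce the statement to the already-established Paley--Wiener theorem (Theorem \ref{PW}) together with the classical sampling theorem (Theorem \ref{Shannon-1}), exactly in the spirit of the proof of Theorem \ref{sampling1}. The key observation is that, after passing to the logarithmic variable, the Mellin convolution $f\ast_{\mathcal{M}}h$ evaluated at $e^{t}$ is precisely a pairing of the form treated in Theorem \ref{PW}. Thus the whole argument hinges on identifying the correct test function $g\in X^{q}(\mathbb{R}_{+})$ and recognizing that $X^{p}(\mathbb{R}_{+})$ is nothing but $L^{p}(\mathbb{R}_{+},dx/x)$, so that $\ast_{\mathcal{M}}$ is convolution on the multiplicative group $\mathbb{R}_{+}$.

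First I would set $\Phi(t)=f\ast_{\mathcal{M}}h(e^{t})$ and unfold the definition of the Mellin convolution,
\[
\Phi(t)=\int_{\mathbb{R}_{+}}f\!\left(\frac{e^{t}}{u}\right)\overline{h(u)}\,\frac{du}{u}.
\]
Then I would perform the inversion substitution $u=1/x$, under which the Haar measure $du/u$ is invariant (it maps to $dx/x$, the orientation change being absorbed by the flip of limits), obtaining
\[
\Phi(t)=\int_{\mathbb{R}_{+}}f(e^{t}x)\,g(x)\,\frac{dx}{x},\qquad g(x):=\overline{h(1/x)}.
\]
This is exactly the function appearing in Theorem \ref{PW}. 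The one thing to check is that $g\in X^{q}(\mathbb{R}_{+})$: the same inversion substitution gives $\|g\|_{X^{q}(\mathbb{R}_{+})}=\|h\|_{X^{q}(\mathbb{R}_{+})}$, so $g$ is an admissible test function. The hypothesis $1<p<\infty$ (hence $1<q<\infty$) is what guarantees, via H\"older's inequality applied to $f(e^{t}\cdot)\in L^{p}(\mathbb{R}_{+},dx/x)$ and $\overline{h}\in L^{q}(\mathbb{R}_{+},dx/x)$, that the convolution integral converges absolutely and defines a bounded function, so that $\Phi$ is genuinely well-defined.

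With $g$ in hand, Theorem \ref{PW} immediately yields that $\Phi\in\mathbf{B}_{\sigma}^{\infty}(\mathbb{R})$, that is, $\Phi$ extends to an entire function of exponential type $\sigma$ bounded on $\mathbb{R}$. I would then apply Theorem \ref{Shannon-1} to $\Phi$, giving $\Phi(t)=\sum_{k\in\mathbb{Z}}\Phi(\gamma k\pi/\sigma)\,sinc(\gamma^{-1}\tfrac{\sigma}{\pi}t-k)$ with uniform convergence on compact sets, and finally substitute $\tau=e^{t}$. Since $\Phi(\gamma k\pi/\sigma)=f\ast_{\mathcal{M}}h(e^{\gamma k\pi/\sigma})$, the sample values are those of $f\ast_{\mathcal{M}}h$ at the points $e^{\gamma k\pi/\sigma}\in\mathbb{R}_{+}$, which is the natural reading of the right-hand side of (\ref{conv-M}) (the bare argument $\gamma k\pi/\sigma$ should be read as $e^{\gamma k\pi/\sigma}$, since $f\ast_{\mathcal{M}}h$ only accepts arguments in $\mathbb{R}_{+}$). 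I do not anticipate any genuine obstacle: the entire content lies in recognizing the convolution as a Theorem \ref{PW} pairing and in the bookkeeping of the inversion-invariance of the $X^{q}$-norm; everything else is a direct appeal to the two quoted theorems.
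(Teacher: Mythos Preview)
Your proposal is correct and follows essentially the same route as the paper: both arguments hinge on the inversion substitution $u=1/x$ (equivalently $x=1/y$) to recognize $f\ast_{\mathcal{M}}h(e^{t})$ as a pairing $\int_{\mathbb{R}_{+}}f(e^{t}x)g(x)\,dx/x$ with $g(x)=\overline{h(1/x)}\in X^{q}(\mathbb{R}_{+})$, after which Theorem~\ref{PW} and Theorem~\ref{Shannon-1} (packaged in the paper as Theorem~\ref{sampling1}) finish the job. The only cosmetic difference is ordering---the paper first invokes Theorem~\ref{sampling1} and then performs the substitution inside the resulting formula, whereas you substitute first and then appeal directly to Theorems~\ref{PW} and~\ref{Shannon-1}---and your remark that the sample points should read $e^{\gamma k\pi/\sigma}$ is well taken.
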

\begin{proof}
In the formula (\ref{1-4}) we replace $g(x),\> x>0, $ by $h(1/x),\>x>0,$   and then perform the substitution  $x=1/y$. After all the formula  (\ref{1-4}) takes the form 
\begin{equation}\label{1-5}
f\ast_{\mathcal{M}}  h(\tau)=\int_{\mathbb{R}_{+}} f\left(\frac{\tau }{y}\right)\overline{h(y)}\frac{dy}{y}=
$$
$$
\sum_{k\in \mathbb{Z}}\int_{\mathbb{R}_{+}} f\left(\frac{e^{\gamma k\pi/\sigma}}{y} \right)\overline{h(y)}\frac{dy}{y}sinc\left(\gamma^{-1}\frac{\sigma}{\pi} \ln \tau -k\right)=
$$
$$
\sum_{k\in \mathbb{Z}}f\ast_{\mathcal{M}}  h\left(\frac{\gamma k\pi}{\sigma}\right)sinc\left(\gamma^{-1}\frac{\sigma}{\pi} \ln \tau -k\right),\>\>\>\tau\in \mathbb{R}_{+},
\end{equation}
where the series converges uniformly on compact subsets of $\mathbb{R}_{+}$. 

Theorem is proven.
\end{proof}
\begin{remark}
Note that  (\ref{conv-M}) is an analog of the formula 
\begin{equation}\label{conv}
f\ast g(t)=\sum_{k\in \mathbb{Z}} f\ast g(\gamma k \pi/\sigma)\>sinc\left(\gamma^{-1}\sigma t /\pi-k\right),
\end{equation}
where  $f\in {\bf B}_{\sigma}^{p}(\mathbb{R}),\>\>1\leq p<\infty$, $ \>g\in L^{q}(\mathbb{R}),\>\>1/p+1/q=1,$  $\>\>\>0<\gamma<1$, and 
$$
f\ast g(t)=\frac{1}{\sqrt{2\pi}}\int_{\mathbb{R}}f(x)g(t-x)dx,
$$
is the classical convolution.

\end{remark}

\subsection{Valiron-Tschakaloff-type sampling formulas }

The next theorem contains  an analog   of  the Valiron-Tschakaloff sampling/interpolation formula  \cite{BFHSS}
 in Mellin analysis.

\begin{thm}\label{sampling3}

If $f\in  \mathbf{B}_{\sigma}^{p}(\Theta),\>\>1<p<\infty,$  $x\in \mathbb{R}_{+},\>t\in \mathbb{R}$, then

\begin{equation}\label{VT-1}
   f(\tau) =
   sinc\left(\frac{\sigma}{\pi}  \ln \tau\right)f(1)+\ln \tau \> sinc\left(\frac{\sigma}{\pi}  \ln \tau\right)(\partial_{x} f)(1)+
   $$
   $$
    \sum_{k\in \mathbb{Z}\setminus \{0\}}f\left(e^{k\pi/\sigma}\right)\frac{\sigma}{k\pi}( \ln \tau)  sinc\left(\frac{\sigma}{\pi}  \ln \tau-k\right),\>\>\>\tau\in \mathbb{R}_{+}.
\end{equation}
The series converges asolutely and uniformly on compact subsets of $\mathbb{R}_{+}$.
\end{thm}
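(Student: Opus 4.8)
The plan is to reduce the statement to the classical Valiron--Tschakaloff formula by means of the logarithmic substitution $x=e^{t}$, which is the device tacitly used throughout Section \ref{sampling}. First I would introduce the auxiliary function $h(t)=f(e^{t})$, $t\in\mathbb{R}$, and record that, by the same elementary computation that gives $\Theta^{k}f(x)=(d/dt)^{k}f(e^{t}x)|_{t=0}$ in the proof of Theorem \ref{basic}, one has $h^{(k)}(t)=(\Theta^{k}f)(e^{t})$ for every $k$ and every $t$. In particular this yields the identifications $h(0)=f(1)$, $h'(0)=(\Theta f)(1)=(\partial_{x}f)(1)$, and $h(k\pi/\sigma)=f(e^{k\pi/\sigma})$ that will appear in (\ref{VT-1}).

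The crucial step is to show that $h$ lies in the \emph{classical} Bernstein space $\mathbf{B}_{\sigma}^{p}(\mathbb{R})$. For this I would compute, using $x=e^{t}$ so that $dt=dx/x$,
$$
\|h^{(k)}\|_{L^{p}(\mathbb{R})}^{p}=\int_{\mathbb{R}}|(\Theta^{k}f)(e^{t})|^{p}\,dt=\int_{\mathbb{R}_{+}}|(\Theta^{k}f)(x)|^{p}\frac{dx}{x}=\|\Theta^{k}f\|_{X^{p}(\mathbb{R}_{+})}^{p},
$$
so that $\|h^{(k)}\|_{L^{p}(\mathbb{R})}=\|\Theta^{k}f\|_{X^{p}(\mathbb{R}_{+})}$, and in particular $\|h\|_{L^{p}(\mathbb{R})}=\|f\|_{X^{p}(\mathbb{R}_{+})}$. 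Since $f\in\mathbf{B}_{\sigma}^{p}(\Theta)$, the defining inequality (\ref{Bern}) transfers verbatim into $\|h^{(k)}\|_{L^{p}(\mathbb{R})}\le\sigma^{k}\|h\|_{L^{p}(\mathbb{R})}$ for all $k$, which by the classical Bernstein characterization recalled in Section \ref{Bern-sp} is precisely the assertion $h\in\mathbf{B}_{\sigma}^{p}(\mathbb{R})$. As $1<p<\infty$, the Nikolskii inequality then also gives $h\in\mathbf{B}_{\sigma}^{\infty}(\mathbb{R})$, so $h$ extends to an entire function of exponential type $\sigma$ that is bounded on the real line.

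With $h$ in hand I would invoke the classical Valiron--Tschakaloff interpolation formula \cite{BFHSS}, which for such $h$ reads
$$
h(t)=sinc\left(\tfrac{\sigma}{\pi}t\right)h(0)+t\,sinc\left(\tfrac{\sigma}{\pi}t\right)h'(0)+\sum_{k\in\mathbb{Z}\setminus\{0\}}h\left(\tfrac{k\pi}{\sigma}\right)\frac{\sigma t}{k\pi}\,sinc\left(\tfrac{\sigma}{\pi}t-k\right),
$$
the series converging absolutely and uniformly on compact subsets of $\mathbb{R}$. Substituting $t=\ln\tau$ together with the identifications $h(\ln\tau)=f(\tau)$, $h(0)=f(1)$, $h'(0)=(\partial_{x}f)(1)$, and $h(k\pi/\sigma)=f(e^{k\pi/\sigma})$ produces (\ref{VT-1}) at once. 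Since $\tau\mapsto\ln\tau$ is a homeomorphism of $\mathbb{R}_{+}$ onto $\mathbb{R}$ carrying compacta to compacta, the asserted absolute and uniform convergence on compact subsets of $\mathbb{R}_{+}$ follows from the corresponding property on $\mathbb{R}$.

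The main obstacle is the middle step: checking that $h(t)=f(e^{t})$ genuinely belongs to $\mathbf{B}_{\sigma}^{p}(\mathbb{R})$ rather than merely obeying a formal inequality. The norm identity $\|h^{(k)}\|_{L^{p}(\mathbb{R})}=\|\Theta^{k}f\|_{X^{p}(\mathbb{R}_{+})}$ is exactly what makes the Mellin Bernstein condition (\ref{Bern}) equivalent to the classical one, and it is also the place where the hypothesis $1<p<\infty$ enters, both to give the $L^{p}$ norms meaning under the substitution and to secure, via Nikolskii, the boundedness of $h$ required before the classical Valiron--Tschakaloff theorem can be applied. Everything else is a transcription through the exponential change of variable.
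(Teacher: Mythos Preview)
Your proof is correct and takes a genuinely more direct route than the paper. The paper proceeds weakly: for each $g\in X^{q}(\mathbb{R}_{+})$ it applies the classical Valiron--Tschakaloff formula to the scalar function $\Phi(t)=\int_{\mathbb{R}_{+}}f(e^{t}x)g(x)\,dx/x\in\mathbf{B}_{\sigma}^{\infty}(\mathbb{R})$ (via Theorem~\ref{PW}), then checks that the resulting series actually converges in the $X^{p}$-norm so that sum and integral may be interchanged, and finally invokes duality (for $1<p<\infty$ the space $X^{q}$ exhausts the continuous functionals on $X^{p}$) to strip off $g$ and obtain the identity (\ref{vt-1}) as an equality in $X^{p}(\mathbb{R}_{+})$; setting $x=1$ and $t=\ln\tau$ then gives (\ref{VT-1}). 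You instead exploit the fact that $t\mapsto e^{t}$ induces an isometric isomorphism of $X^{p}(\mathbb{R}_{+})$ onto $L^{p}(\mathbb{R})$ intertwining $\Theta$ with $d/dt$, so that $h=f\circ\exp$ lies directly in the classical Bernstein space $\mathbf{B}_{\sigma}^{p}(\mathbb{R})\subset\mathbf{B}_{\sigma}^{\infty}(\mathbb{R})$ and the pointwise Valiron--Tschakaloff formula applies to $h$ itself. Your argument is shorter and more elementary; in fact it does not genuinely need $p>1$ (Nikolskii's inequality covers $p=1$, and for $p=\infty$ one already has $h\in\mathbf{B}_{\sigma}^{\infty}(\mathbb{R})$), so your remark about where the hypothesis enters is slightly misplaced. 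The paper's weak approach, by contrast, really uses reflexivity at the duality step, but it has the advantage of fitting uniformly into the abstract Banach-space scheme of \cite{Pes15a}, \cite{Pes15b}, where point evaluation $f\mapsto f(1)$ is unavailable and one is forced to work through pairings with test elements~$g$.
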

\begin{proof}
As we know (see Theorem \ref{PW}), if $f\in  \mathbf{B}_{\sigma}^{p}(\Theta),$ then the function
\begin{equation}\label{}
\Phi(t)=\int_{\mathbb{R}_{+}} f(e^{t}x)\overline{g(x)}\frac{dx}{x},\>\>\>t\in \mathbb{R},
\end{equation}
belongs to $\mathbf{B}_{\sigma}^{\infty}(\mathbb{R})$.
By applying to it the Valiron-Tschakaloff sampling/interpolation formula which holds for functions in $ \mathbf{B}_{\sigma}^{\infty}(\mathbb{R})$ (see \cite{BFHSS})
\begin{equation}\label{VT}
h(t)=t \> sinc\left(\frac{\sigma  t}{\pi}\right)h^{'}(0)+
$$
$$
 sinc\left(\frac{\sigma  t}{\pi}\right)f(0)+\sum_{k\neq 0}\frac{\sigma t}{k\pi} sinc\left(\frac{\sigma  t}{\pi}-k\right)h\left(\frac{k\pi}{\sigma}\right),\>\>\>h\in \mathbb{B}_{\sigma}^{\infty}(\mathbb{R}),
\end{equation}
where convergence is absolute and uniform on compact subsets of $\mathbb{R}$,
we obtain
$$
\Phi(t)=t \> sinc\left(\frac{\sigma  t}{\pi}\right)\Phi^{'}(0)+
$$
$$
 sinc\left(\frac{\sigma  t}{\pi}\right)f(0)+\sum_{k\neq 0}\frac{\sigma t}{k\pi} sinc\left(\frac{\sigma  t}{\pi}-k\right)\Phi\left(\frac{k\pi}{\sigma}\right),
$$
or for $f\in \mathbb{B}_{\sigma}^{p}(\mathbb{R}), 1<p<\infty,$
\begin{equation}\label{vt-0}
\int_{\mathbb{R}_{+}} f(e^{t}x) g(x)\frac{dx}{x} =
  \int_{\mathbb{R}_{+}} \left[sinc\left(\frac{\sigma  t}{\pi}\right)f(x)+t \> sinc\left(\frac{\sigma  t}{\pi}\right)(x\partial_{x} f)(x)\right]g(x)\frac{dx}{x}+
   $$
   $$
    \sum_{k\in \mathbb{Z}\setminus \{0\}}\frac{\sigma t}{k\pi}  sinc\left(\frac{\sigma  t}{\pi}-k\right)\int_{\mathbb{R}_{+}}f(e^{k\pi/\sigma}x)g(x)\frac{dx}{x},
\end{equation}
where convergence is absolute and uniform on compact subsets of $\mathbb{R}$.
Since the following series converges in the norm of $X^{p}(\mathbb{R}_{+})$:
$$
   \left\| \sum_{k\in \mathbb{Z}\setminus \{0\}}\frac{\sigma t}{k\pi}  sinc\left(\frac{\sigma  t}{\pi}-k\right)f(e^{k\pi/\sigma}x)\right\|_{X^{p}(\mathbb{R}_{+})}\leq
   \|f\|_{X^{p}(\mathbb{R}_{+})}\sum_{k\neq 0, \>\sigma t/\pi}\frac{1}{|\sigma t/\pi-k|}\frac{1}{|k|}\leq 
   $$
   $$
   \|f\|_{X^{p}(\mathbb{R}_{+})}\left(\sum_{k\neq \sigma t/\pi}\frac{1}{|\sigma t/\pi-k|^{p}}\right)^{1/p}\left(\sum_{k\neq 0}\frac{1}{|k|^{q}}\right)^{1/q}<\infty,
$$
we can rewrite (\ref{vt-0}) as 
$$
\int_{\mathbb{R}_{+}} f(e^{t}x) g(x)\frac{dx}{x} =
  \int_{\mathbb{R}_{+}} \left[sinc\left(\frac{\sigma  t}{\pi}\right)f(x)+t \> sinc\left(\frac{\sigma  t}{\pi}\right)(x\partial_{x} f)(x)\right]g(x)\frac{dx}{x}+
   $$
   $$
    \int_{\mathbb{R}_{+}} \left[  \sum_{k\in \mathbb{Z}\setminus \{0\}}\frac{\sigma t}{k\pi}  sinc\left(\frac{\sigma  t}{\pi}-k\right)f(e^{k\pi/\sigma}x)\right]g(x)\frac{dx}{x}.
$$
The last equality holds for any $g\in X^{q}(\mathbb{R}_{+}),\>1/p+1/q=1,$ and since for $1<p<\infty$ the space $X^{q}(\mathbb{R}_{+})$ contains all functionals for $X^{p}(\mathbb{R}_{+})$, we can conclude that the following equality holds

\begin{equation}\label{vt-1}
   f(e^{t}x) =
   sinc\left(\frac{\sigma  t}{\pi}\right)f(x)+t \> sinc\left(\frac{\sigma  t}{\pi}\right)(x\partial_{x} f)(x)+
   $$
   $$
    \sum_{k\in \mathbb{Z}\setminus \{0\}}\frac{\sigma t}{k\pi}  sinc\left(\frac{\sigma  t}{\pi}-k\right)f(e^{k\pi/\sigma}x).
\end{equation}
Substituting  $x=1$ into (\ref{vt-1}) we  obtain
\begin{equation}\label{VT-1}
   f(e^{t}) =
   sinc\left(\frac{\sigma  t}{\pi}\right)f(1)+t \> sinc\left(\frac{\sigma  t}{\pi}\right)(\partial_{x} f)(1)+ 
   $$
   $$
   \sum_{k\in \mathbb{Z}\setminus \{0\}}\frac{\sigma t}{k\pi}  sinc\left(\frac{\sigma  t}{\pi}-k\right)f(e^{k\pi/\sigma}),\>\>\>t\in\mathbb{R}.
\end{equation}
For $\tau=e^{t},\>\>t=\ln \tau$, $t\in \mathbb{R}$, one has
\begin{equation}\label{VT-1}
   f(\tau) =
   sinc\left(\frac{\sigma}{\pi}  \ln \tau\right)f(1)+\ln \tau \> sinc\left(\frac{\sigma}{\pi}  \ln \tau\right)(\partial_{x} f)(1)+
   $$
   $$
    \sum_{k\in \mathbb{Z}\setminus \{0\}}f\left(e^{k\pi/\sigma}\right)\frac{\sigma}{k\pi}( \ln \tau)  sinc\left(\frac{\sigma}{\pi}  \ln \tau-k\right),\>\>\>\tau\in \mathbb{R}_{+}.
\end{equation}
Theorem is proven.

\end{proof}
For every $f\in \mathbf{B}_{\sigma}^{p}(\Theta),\>g\in X^{q}(\mathbb{R}_{+}),\>1/p+1/q=1,$
let's introduce the function $\Psi$ defined as follows:

\begin{equation}\label{Psi1}
\Psi(t)=\frac{1}{t}\left(\Phi(t)-\Phi(0)    \right)=\int_{\mathbb{R}_{+}}\frac{f(e^{t}x)-f(x)}{t}g(x)\frac{dx}{x},
\end{equation}
if $t\neq 0$ and 
\begin{equation}\label{Psi2}
\Psi(0)=\frac{d}{dt}\Phi(t)|_{t=0}=\int_{\mathbb{R}_{+}}\Theta f(x)g(x)\frac{dx}{x},
\end{equation}
if $t=0$.

\begin{lemma}\label{Lem01}
If $f\in \mathbf{B}_{\sigma}^{p}(\Theta),\>g\in X^{q}(\mathbb{R}_{+}),\>1/p+1/q=1,$ then 
$\Psi(t)$ defined in (\ref{Psi1}), (\ref{Psi2}) belongs to $\mathbf{B}_{\sigma}^{r}(\mathbb{R})$ for any $r>1$.

\end{lemma}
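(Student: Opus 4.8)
The plan is to recognize $\Psi$ as the first divided difference of $\Phi$ and to transfer to it the Paley--Wiener properties of $\Phi$ furnished by Theorem \ref{PW}. First, for the fixed $g\in X^{q}(\mathbb{R}_{+})$, Theorem \ref{PW} tells me that $\Phi(z)=\int_{\mathbb{R}_{+}}f(e^{z}x)g(x)\frac{dx}{x}$ extends to an entire function of exponential type $\sigma$ that is bounded on the real line, say $|\Phi(t)|\leq M$ for all $t\in\mathbb{R}$. I would then introduce the entire extension $\widetilde{\Psi}(z)=\bigl(\Phi(z)-\Phi(0)\bigr)/z$ for $z\neq 0$, with $\widetilde{\Psi}(0)=\Phi'(0)$. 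Since $\Phi(z)-\Phi(0)$ is entire and vanishes at the origin, the singularity at $z=0$ is removable, so $\widetilde{\Psi}$ is entire and agrees on $\mathbb{R}$ with the function $\Psi$ defined in (\ref{Psi1})--(\ref{Psi2}).

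Next I would verify that $\widetilde{\Psi}$ has exponential type at most $\sigma$. Subtracting the constant $\Phi(0)$ does not change the exponential type (here $\sigma>0$), and dividing by the monomial $z$ only diminishes the maximum modulus on circles: writing $M_{F}(r)=\max_{|z|=r}|F(z)|$, I have $M_{\widetilde{\Psi}}(r)\leq r^{-1}\bigl(M_{\Phi}(r)+|\Phi(0)|\bigr)$ for $r>0$, and since $M_{\Phi}(r)+|\Phi(0)|$ grows at the same exponential rate as $M_{\Phi}(r)$, it follows that $\limsup_{r\to\infty}r^{-1}\log M_{\widetilde{\Psi}}(r)\leq\sigma$. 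Equivalently, the identity $\Psi(t)=\int_{0}^{1}\Phi'(st)\,ds$ exhibits $\Psi$ as an average of dilates of $\Phi'$, whose Fourier transforms are all supported in $[-\sigma,\sigma]$, confirming the type bound from the spectral side.

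It remains to check that $\Psi\in L^{r}(\mathbb{R})$ for $r>1$. On $[-1,1]$ the entire function $\Psi$ is continuous, hence bounded, so $\int_{-1}^{1}|\Psi(t)|^{r}\,dt<\infty$. For $|t|>1$ the boundedness of $\Phi$ yields $|\Psi(t)|=|\Phi(t)-\Phi(0)|/|t|\leq 2M/|t|$, whence $\int_{|t|>1}|\Psi(t)|^{r}\,dt\leq (2M)^{r}\int_{|t|>1}|t|^{-r}\,dt<\infty$ precisely because $r>1$. Combining the two ranges gives $\Psi\in L^{r}(\mathbb{R})$, and together with the exponential-type estimate this places $\Psi$ in $\mathbf{B}_{\sigma}^{r}(\mathbb{R})$.

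The main point -- and the only place where the hypothesis $r>1$ is genuinely used -- is the tail estimate: the divided difference inherits merely a $1/|t|$ decay from the boundedness of $\Phi$, and $|t|^{-1}$ just fails to be integrable at infinity when $r=1$. Everything else is a routine transfer of the Paley--Wiener properties of $\Phi$ through the elementary operation of forming a divided difference, so I do not anticipate any serious obstacle beyond the bookkeeping of the exponential type.
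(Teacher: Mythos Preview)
Your proof is correct and follows essentially the same route as the paper: invoke Theorem~\ref{PW} to place $\Phi$ in $\mathbf{B}_{\sigma}^{\infty}(\mathbb{R})$, observe that the divided difference $\Psi$ is again entire of exponential type at most $\sigma$, and then use the elementary tail bound $|\Psi(t)|\le 2M/|t|$ to secure membership in $L^{r}(\mathbb{R})$ for $r>1$. The only cosmetic difference is that the paper verifies the exponential type of $\Psi$ via the coefficient criterion $\varlimsup_{k\to\infty}\sqrt[k]{k!\,|c_{k+1}|}\le\sigma$, whereas you use the (equivalent and arguably more direct) maximum-modulus estimate $M_{\Psi}(r)\le r^{-1}\bigl(M_{\Phi}(r)+|\Phi(0)|\bigr)$.
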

\begin{proof}
We remind \cite{Nik} that a function $h(t)=\sum_{k}^{\infty}a_{k}t^{k}$ is an entire function of the exponential type $\sigma$ if and only if the the following condition holds 
$$
\overline{\lim}_{k\rightarrow \infty}\sqrt[k]{k!|a_{k}|}\leq \sigma .
$$
Since $\Phi$ belongs to classical Bernstein class $\mathbf{B}_{\sigma}^{\infty }(\mathbb{R})$ (see Theorem \ref{PW}) it is an entire function of the exponential type $\sigma$. Thus $\Phi(t)=\sum_{k}c_{k}t^{k}$ with $\>\>\>\overline{\lim}_{k\rightarrow \infty}\sqrt[k]{k!|c_{k}|}\leq \sigma $. Now we have that $\Psi(t)=\sum_{k} c_{k}t^{k-1}$, where one clearly has   $\>\>\>\overline{\lim}_{k\rightarrow \infty}\sqrt[k]{k!|c_{k+1}|}\leq \sigma $, which means that $\Psi$ is an entire function of the exponential type $\sigma$.  Moreover, for any $r>1$ the function $\Psi$ is in $L^{r}(\mathbb{R})$ since 
$$
\left| \Psi(t)\right|=\left| \int_{\mathbb{R}_{+}} \frac{f(e^{t}x)-f(x)}{t}g(x)\frac{dx}{x}\right|\leq \frac{2\|f\|_{X^{p}(\mathbb{R}_{+})}\|g\|_{X^{q}(\mathbb{R}_{+})}}{|t|}.
$$
Thus $\Psi$ belongs to  $\mathbf{B}_{\sigma}^{r}(\Theta)$ for any $r>1$.

Lemma is proven.

\end{proof}

\begin{thm}\label{sampling4}
If $f\in \mathbf{B}_{\sigma}^{p}(\Theta),\>1<p<\infty,$ then the following sampling formulas hold for $\tau\in \mathbb{R}_{+}$

\begin{equation}\label{s3}
f(\tau)=f(1)+\ln \tau\>(\partial_{x}f)(1) \>\ sinc\left(\frac{\sigma}{\pi} \ln \tau\right)+
$$
$$
\ln \tau \sum_{k\neq 0}\frac{f\left(e^{\frac{k\pi}{\sigma}}
\right)-f(1)}{\frac{k\pi}{\sigma}} \ sinc\left(\frac{\sigma}{\pi} \ln \tau-k\right),
\end{equation}
where the series converges uniformly on compact subsets of $\mathbb{R}_{+}$.

\end{thm}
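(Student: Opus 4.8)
The plan is to reduce the whole statement to the scalar Bernstein function $\Psi$ furnished by Lemma \ref{Lem01} and to sample it at the critical (Nyquist) rate. Fix $f\in\mathbf{B}_{\sigma}^{p}(\Theta)$ with $1<p<\infty$ and an arbitrary $g\in X^{q}(\mathbb{R}_{+})$, $1/p+1/q=1$; as in Theorem \ref{PW} set $\Phi(t)=\int_{\mathbb{R}_{+}}f(e^{t}x)g(x)\frac{dx}{x}\in\mathbf{B}_{\sigma}^{\infty}(\mathbb{R})$, and let $\Psi$ be the function attached to $\Phi$ in (\ref{Psi1})--(\ref{Psi2}), so that by its very definition $\Phi(t)=\Phi(0)+t\,\Psi(t)$ for all $t\in\mathbb{R}$. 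By Lemma \ref{Lem01} we have $\Psi\in\mathbf{B}_{\sigma}^{r}(\mathbb{R})$ for every $r>1$; fixing one finite such $r$, I would apply the classical Shannon sampling theorem at the Nyquist rate, valid for $L^{r}$-Bernstein functions with $1<r<\infty$ (see \cite{BSS1}), to obtain
\[
\Psi(t)=\sum_{k\in\mathbb{Z}}\Psi\left(\frac{k\pi}{\sigma}\right)sinc\left(\frac{\sigma}{\pi}t-k\right),
\]
the series converging uniformly on compact subsets of $\mathbb{R}$. Multiplying by $t$ and restoring $\Phi(0)$ gives a sampling formula for $\Phi$ with the node $k=0$ split off.

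Next I would read off the sample values. From (\ref{Psi2}) one has $\Psi(0)=\int_{\mathbb{R}_{+}}\Theta f(x)g(x)\frac{dx}{x}$, while for $k\neq0$ formula (\ref{Psi1}) gives $\Psi(k\pi/\sigma)=\frac{\sigma}{k\pi}\int_{\mathbb{R}_{+}}\bigl(f(e^{k\pi/\sigma}x)-f(x)\bigr)g(x)\frac{dx}{x}$. Substituting these into the expansion for $\Phi$ produces an identity in which every term is the integral of an $X^{p}(\mathbb{R}_{+})$-function against $g$. To pass the summation through the integral I would reproduce the norm estimate already used in Theorem \ref{sampling3}: since the Mellin translations are isometries of $X^{p}(\mathbb{R}_{+})$, we have $\|f(e^{k\pi/\sigma}\cdot)-f\|_{X^{p}(\mathbb{R}_{+})}\le 2\|f\|_{X^{p}(\mathbb{R}_{+})}$, and because $\bigl|sinc(\frac{\sigma t}{\pi}-k)\bigr|\le C/|k|$ for large $|k|$, the series $\sum_{k\neq0}\frac{\sigma}{k\pi}\bigl(f(e^{k\pi/\sigma}\cdot)-f\bigr)\,sinc(\frac{\sigma t}{\pi}-k)$ converges absolutely in $X^{p}(\mathbb{R}_{+})$; hence the bounded functional $h\mapsto\int_{\mathbb{R}_{+}}h\,g\frac{dx}{x}$ commutes with the sum.

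Finally, the resulting identity holds for every $g\in X^{q}(\mathbb{R}_{+})$, and since for $1<p<\infty$ the space $X^{q}(\mathbb{R}_{+})$ exhausts the dual of $X^{p}(\mathbb{R}_{+})$, I would cancel $g$ exactly as in the passage from (\ref{vt-0}) to (\ref{vt-1}), arriving at the pointwise identity
\[
f(e^{t}x)=f(x)+t\,sinc\left(\frac{\sigma t}{\pi}\right)(x\partial_{x}f)(x)+t\sum_{k\neq0}\frac{f(e^{k\pi/\sigma}x)-f(x)}{k\pi/\sigma}\,sinc\left(\frac{\sigma t}{\pi}-k\right).
\]
Putting $x=1$, so that $(x\partial_{x}f)(1)=(\partial_{x}f)(1)$, and then $\tau=e^{t}$, $t=\ln\tau$, yields (\ref{s3}); the uniform convergence on compact subsets of $\mathbb{R}_{+}$ is inherited from that of the Shannon series for $\Psi$, exactly as claimed in Theorem \ref{sampling3}. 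The main obstacle I anticipate lies in the sampling step itself: Theorem \ref{Shannon-1} only supplies oversampled ($0<\gamma<1$) expansions, so it is essential that Lemma \ref{Lem01} places $\Psi$ in an $L^{r}$-Bernstein class with $r<\infty$, where the Nyquist-rate theorem is genuinely available. This is precisely what forces $1<p<\infty$: it makes $q$ finite, permits a legitimate choice of finite $r>1$, and guarantees the reflexive duality needed to strip off $g$ in the last step.
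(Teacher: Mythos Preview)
Your proposal is correct and follows essentially the same route as the paper: invoke Lemma~\ref{Lem01} to place $\Psi$ in an $L^{r}$-Bernstein class, apply the Nyquist-rate Shannon expansion from \cite{BSS1}, read off the samples via (\ref{Psi1})--(\ref{Psi2}), pass the sum inside the integral, strip off $g$ by duality, and finally set $x=1$, $\tau=e^{t}$. Your write-up is in fact slightly more explicit than the paper's in justifying the $X^{p}$-convergence of the series and in explaining why the hypothesis $1<p<\infty$ is needed at both the sampling and the duality steps.
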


\begin{proof}
Consider the same $\Psi$ as before. Since $\Psi\in \mathbf{B}_{\sigma}^{r}(\Theta)$ 
it implies (see \cite{BSS1}. p.46) the following formula
$$
\Psi(t)=\sum_{k\in \mathbb{Z}}\Psi\left(\frac{k}{\sigma}\pi\right) sinc\left(\frac{\sigma}{\pi} t-k\right),
$$
the series being uniformly convergent on each compact subset of $\mathbb{R}$.

Thus
$$
 \int_{\mathbb{R}_{+}} \frac{f(e^{t}x)-f(x)}{t}g(x)\frac{dx}{x}=\int_{\mathbb{R}_{+}}\Theta f(x)g(x)\frac{dx}{x}sinc\left(\frac{\sigma}{\pi} t\right)+
 $$
 $$
 \sum_{k\in \mathbb{Z}\setminus \{0\}} \left(\int_{\mathbb{R}_{+}} \frac{f(e^{\frac{k}{\sigma}\pi}x)-f(x)}{\frac{k}{\sigma}\pi}g(x)\frac{dx}{x} \right)sinc\left(\frac{\sigma}{\pi} t-k\right).
$$
The last formula can be rewritten as
$$
\int_{\mathbb{R}_{+}}f(e^{t}x)g(x)\frac{dx}{x}=\int_{\mathbb{R}_{+}}f(x)g(x)\frac{dx}{x}+\left(\int_{\mathbb{R}_{+}}\Theta f(x)g(x)\frac{dx}{x}\right)sinc\left(\frac{\sigma}{\pi} t\right)+
$$
$$
t \sum_{k\in \mathbb{Z}\setminus \{0\}} \left(\int_{\mathbb{R}_{+}} \frac{f(e^{\frac{k}{\sigma}\pi}x)-f(x)}{\frac{k}{\sigma}\pi}g(x)\frac{dx}{x} \right)sinc\left(\frac{\sigma}{\pi} t-k\right).
$$
Since  the series 
$$
\sum_{k\in \mathbb{Z}\setminus \{0\}} \frac{f(e^{\frac{k}{\sigma}\pi}x)-f(x)}{\frac{k}{\sigma}\pi}sinc\left(\frac{\sigma}{\pi} t-k\right),
$$
converges in $X^{p}(\mathbb{R}_{+})$ we can write the equality
$$
\int_{\mathbb{R}_{+}}f(e^{t}x)g(x)\frac{dx}{x}=
$$
$$
\int_{\mathbb{R}_{+}}\left(f(x)+t\Theta f(x)sinc\left(\frac{\sigma}{\pi} t\right)+t\sum_{k\in \mathbb{Z}\setminus \{0\}} \frac{f(e^{\frac{k}{\sigma}\pi}x)-f(x)}{\frac{k}{\sigma}\pi}sinc\left(\frac{\sigma}{\pi} t-k\right)\right)g(x)\frac{dx}{x}.
$$
Because this equality holds true for every $g\in X^{q}(\mathbb{R}_{+}), \>1p+1/q=1, \>1<p<\infty, $ we finally coming to the formula
\begin{equation}\label{s1}
f(e^{t}x)=f(x)+
$$
$$
t(x\partial_{x}f)(x)\> sinc\left(\frac{\sigma t}{\pi}\right)+t\sum_{k\neq 0}\frac{f\left(e^{\frac{k\pi}{\sigma}}
x\right)-f(x)}{\frac{k\pi}{\sigma}} sinc\left(\frac{\sigma t}{\pi}-k\right).
\end{equation}
By setting $x=1$ we obtain
\begin{equation}\label{s2}
f(e^{t})=f(1)+
$$
$$
t(\partial_{x}f)(1) \>sinc\left(\frac{\sigma t}{\pi}\right)+t\sum_{k\neq 0}\frac{f\left(e^{\frac{k\pi}{\sigma}}
\right)-f(1)}{\frac{k\pi}{\sigma}}  sinc\left(\frac{\sigma t}{\pi}-k\right),
\end{equation}
and for $\tau=e^{t},\>t=\ln \tau,$ one has (\ref{s3}). Theorem is proven.

\end{proof}

\section{Two theorems which involve irregular sampling}\label{irreg}

The following fact was proved by J.R. Higgins in \cite{Hig}.
\begin{thm}\label{Hig}
Let $\{t_{k}\}_{k\in \mathbb{Z}}$ be a sequence of real numbers such that 
\begin{equation}\label{Hig1}
\sup_{k\in\mathbb{Z}}|t_{k}-k|<1/4.
\end{equation}
Define the entire function 
\begin{equation}\label{Hig2}
G(z)=(z-t_{0})\prod_{k\in \mathbb{Z}}\left(1-\frac{z}{t_{k}}\right)\left(1-\frac{z}{t_{-k}}\right).
\end{equation}
 Then for all $f\in \mathbf{B}_{\pi}^{2}(\mathbb{R})$ we have
$$
f(t)=\sum_{k\in \mathbb{Z}}f(t_{k})\frac{G(t)}{G^{'}(t_{k})(t-t_{k})},
$$
uniformly on every compact subset of $\mathbb{R}$.
\end{thm}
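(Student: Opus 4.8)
The plan is to reduce the statement to Kadec's $1/4$ theorem through the Paley--Wiener Fourier isomorphism. First I would establish the analytic properties of the generating function $G$. Writing $t_k=k+\delta_k$ with $\sup_k|\delta_k|<1/4$, I would compare the canonical product defining $G$ with the product $\frac{\sin\pi z}{\pi z}=\prod_{k\neq 0}\left(1-\frac{z}{k}\right)$: the ratio is controlled by $\prod_k\left|\frac{t_k-z}{k-z}\right|$, and the bound $|\delta_k|<1/4$ together with the resulting separation of the $t_k$ forces this to converge and to grow subexponentially. This shows $G$ is entire of exponential type exactly $\pi$, with simple zeros precisely at the points $t_k$. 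Consequently each
$$
S_k(t):=\frac{G(t)}{G'(t_k)(t-t_k)}
$$
is entire of type $\pi$, lies in $L^{2}(\mathbb{R})$ (hence in $\mathbf{B}_{\pi}^{2}(\mathbb{R})$), and satisfies the interpolation identity $S_k(t_j)=\delta_{jk}$, since $G$ vanishes at every $t_j$ with $j\neq k$ and $\lim_{t\to t_k}G(t)/(t-t_k)=G'(t_k)$.

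Next I would invoke Kadec's $1/4$ theorem: the hypothesis $\sup_k|t_k-k|<1/4$ is exactly the condition guaranteeing that the exponential system $\{e^{it_k\xi}\}_{k\in\mathbb{Z}}$ is a Riesz basis for $L^{2}(-\pi,\pi)$. Under the Fourier--Paley--Wiener isometry $\mathcal{F}\colon \mathbf{B}_{\pi}^{2}(\mathbb{R})\to L^{2}(-\pi,\pi)$, the sampling functional $f\mapsto f(t_k)$ corresponds, up to a fixed constant, to pairing $\widehat f$ against $e^{it_k\xi}$. Therefore, after transport, the point-evaluation functionals form a Riesz basis of functionals on $\mathbf{B}_{\pi}^{2}(\mathbb{R})$, and they admit a unique biorthogonal Riesz basis $\{P_k\}\subset\mathbf{B}_{\pi}^{2}(\mathbb{R})$ characterised by $P_k(t_j)=\delta_{jk}$.

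The key identification step is to check that $P_k=S_k$. Both are elements of $\mathbf{B}_{\pi}^{2}(\mathbb{R})$ with the same values $\delta_{jk}$ at the complete interpolating sequence $\{t_j\}$; since a function in $\mathbf{B}_{\pi}^{2}(\mathbb{R})$ is determined by its samples on such a sequence, uniqueness forces $P_k=S_k$. (This is the classical fact, essentially due to Levin and Golovin, that the biorthogonal functions associated with a sine-type generating function are exactly the $G(t)/(G'(t_k)(t-t_k))$.) With this in hand, expanding an arbitrary $f\in\mathbf{B}_{\pi}^{2}(\mathbb{R})$ in the Riesz basis $\{S_k\}$ yields
$$
f(t)=\sum_{k\in\mathbb{Z}}c_k\,S_k(t),\qquad c_k=f(t_k),
$$
where the coefficients are recovered as $f(t_k)$ by evaluating the interpolation identity, and the series converges in the $L^{2}$ norm.

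Finally I would upgrade $L^{2}$ convergence to uniform convergence on compact subsets of $\mathbb{R}$. Since $\mathbf{B}_{\pi}^{2}(\mathbb{R})$ is a reproducing kernel Hilbert space, evaluation at a point is continuous with norm bounded uniformly for $t$ in a compact set; applying the Cauchy--Schwarz inequality to the tail of the series, together with the Riesz-basis coefficient estimate $\sum_k|f(t_k)|^{2}\leq C\|f\|_{2}^{2}$, gives the required local uniform convergence. The main obstacle is the hard analytic input, Kadec's $1/4$ theorem, which supplies the Riesz-basis property precisely at the stated threshold; the comparison estimates for $G$ and the biorthogonality identification are then comparatively routine, once the sine-type growth of $G$ is in place.
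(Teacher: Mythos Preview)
The paper does not prove this statement at all; it is quoted as a known result of J.\,R.~Higgins (reference \cite{Hig} in the paper) and then applied as a black box to the function $\Psi$. There is therefore no in-paper argument to compare your proposal against.

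Your outline via Kadec's $1/4$ theorem and the Paley--Wiener isometry is a standard and essentially correct route to this class of results; it is in fact more structural than Higgins' original two-page argument, which proceeds by a direct contour/residue computation rather than through the Riesz-basis machinery. One point to tighten if you carry this out: the assertion that each $S_k$ lies in $L^2(\mathbb{R})$ rests on $G$ being of sine type (in particular, $|G(t)|$ bounded above and below on horizontal lines away from the real axis), which you gesture at but do not isolate; once that estimate is in place the rest of your sketch goes through.
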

As we know (Lemma \ref{Lem01}) for every $f\in \mathbf{B}_{\pi}^{p}(\Theta),\>g\in X^{q}(\mathbb{R}_{+}),\>1/p+1/q=1,$ the function $\Psi$ defined as 
\begin{equation}\label{Psi10}
\Psi(t)=\int_{\mathbb{R}_{+}}\frac{f(e^{t}x)-f(x)}{t}g(x)\frac{dx}{x},
\end{equation}
if $t\neq 0$ and 
\begin{equation}\label{Psi20}
\Psi(0)=\int_{\mathbb{R}_{+}}\Theta f(x)g(x)\frac{dx}{x},
\end{equation}
if $t=0$, belongs to $\mathbf{B}_{\pi}^{2}(\mathbb{R})$.
Applying to it Theorem \ref{Hig} we obtain the following.
\begin{thm} Under assumptions and notations of Theorem \ref{Hig},   for every $f\in \mathbf{B}_{\pi}^{p}(\Theta),\>g\in X^{q}(\mathbb{R}_{+}),\>1/p+1/q=1, 1<p<\infty,$ the following formula holds 
\begin{equation}\label{H}
\Psi(t)=\sum_{k\in \mathbb{Z}}\Psi(t_{k})\frac{G(t)}{G^{'}(t_{k})(t-t_{k})},
\end{equation}
 uniformly on every compact subset of $\mathbb{R}$.
\end{thm}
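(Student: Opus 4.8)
The plan is to treat this statement as a direct corollary of the two results already assembled in this section: Lemma \ref{Lem01}, which places the auxiliary function $\Psi$ inside a classical Bernstein space on $\mathbb{R}$, and the Higgins irregular-sampling theorem (Theorem \ref{Hig}), which is precisely the expansion being asserted. Since the conclusion (\ref{H}) is phrased in terms of $\Psi$ itself rather than in terms of $f$, no duality or \emph{unfolding} of the functional $g$ is required here, and the proof reduces to verifying that the hypotheses of Theorem \ref{Hig} are met.

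First I would fix $f\in \mathbf{B}_{\pi}^{p}(\Theta)$ and $g\in X^{q}(\mathbb{R}_{+})$ with $1/p+1/q=1$, $1<p<\infty$, and form $\Psi$ exactly as in (\ref{Psi10})--(\ref{Psi20}). Invoking Lemma \ref{Lem01} with the exponential type specialized to $\sigma=\pi$ and with the exponent $r=2$ (admissible since $2>1$), I conclude that $\Psi\in \mathbf{B}_{\pi}^{2}(\mathbb{R})$: it is entire of exponential type $\pi$, and it is square-integrable on the real line because the bound $|\Psi(t)|\leq 2\|f\|_{X^{p}(\mathbb{R}_{+})}\|g\|_{X^{q}(\mathbb{R}_{+})}/|t|$ from the proof of Lemma \ref{Lem01} gives $L^{2}$-decay at infinity, while boundedness near the origin follows from $\Psi(0)$ being finite.

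Second, with $\Psi\in \mathbf{B}_{\pi}^{2}(\mathbb{R})$ in hand, I would apply Theorem \ref{Hig} verbatim with $f$ there replaced by $\Psi$, using the given node sequence $\{t_{k}\}_{k\in\mathbb{Z}}$ subject to the gap condition (\ref{Hig1}) and the interpolating function $G$ from (\ref{Hig2}). This produces exactly $\Psi(t)=\sum_{k\in\mathbb{Z}}\Psi(t_{k})\,G(t)/(G'(t_{k})(t-t_{k}))$ with convergence uniform on every compact subset of $\mathbb{R}$, which is the assertion (\ref{H}).

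The only point demanding any care, and the closest thing to an obstacle, is the compatibility of the function spaces: Higgins's theorem is stated for the Hilbert-space class $\mathbf{B}_{\pi}^{2}(\mathbb{R})$ and for the specific type $\pi$, whereas Lemma \ref{Lem01} yields membership in $\mathbf{B}_{\pi}^{r}(\mathbb{R})$ for every $r>1$; one simply selects $r=2$, and the required type matches because we are in the case $\sigma=\pi$. I would finally remark that, should one wish to convert (\ref{H}) into a pointwise formula for $f$ itself, one could proceed as in the proof of Theorem \ref{sampling4}, substituting the integral expressions for $\Psi(t)$ and $\Psi(t_{k})$ and then removing the arbitrary functional $g\in X^{q}(\mathbb{R}_{+})$ via the duality $(X^{p})^{*}=X^{q}$ valid for $1<p<\infty$; but this step is not needed for the statement as given.
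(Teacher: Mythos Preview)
Your proposal is correct and follows exactly the paper's own argument: the paper simply observes that Lemma~\ref{Lem01} (with $\sigma=\pi$, $r=2$) places $\Psi$ in $\mathbf{B}_{\pi}^{2}(\mathbb{R})$ and then applies Higgins's Theorem~\ref{Hig} directly. Your additional remarks about selecting $r=2$ and about the optional duality step are accurate elaborations, but the core argument is identical to the paper's.
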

Note, that if the sequence $\{t_{k}\}$ does not contain zero then the formula (\ref{H}) takes the form
$$
\int_{\mathbb{R}_{+}}\frac{f(e^{t}x)-f(x)}{t}g(x)\frac{dx}{x}=\sum_{k\in \mathbb{Z}}
\left(\int_{\mathbb{R}_{+}}\frac{f(e^{t_{k}}x)-f(x)}{t_{k}}g(x)\frac{dx}{x}\right)\frac{G(t)}{G^{'}(t_{k})(t-t_{k})}.
$$
But in the case $t_{0}=0$ the formula (\ref{H}) has the form

$$
\int_{\mathbb{R}_{+}}\frac{f(e^{t}x)-f(x)}{t}g(x)\frac{dx}{x}=\left(\int_{\mathbb{R}_{+}}\Theta f(x)g(x)\frac{dx}{x}\right)\frac{G(t)}{G^{'}(0)t}+
$$
$$\sum_{k\in \mathbb{Z}\setminus \{0\}}
\left(\int_{\mathbb{R}_{+}}\frac{f(e^{t_{k}}x)-f(x)}{t_{k}}g(x)\frac{dx}{x}\right)\frac{G(t)}{G^{'}(t_{k})(t-t_{k})}.
$$

In the paper by C. Seip \cite{S} the following  result can be found.
\begin{thm}\label{S}    Under assumptions and notations of Theorem \ref{Hig},  for any
 $0<\delta<\pi$ and all $f\in \mathbf{B}_{\pi-\delta}^{\infty}(\mathbb{R})$the following holds true
$$
f(t)=\sum_{k\in \mathbb{Z}}f(t_{k})\frac{G(k)}{G^{'}(t_{k})(t-t_{k})},
$$
uniformly on all compact subsets of $\mathbb{R}$.
\end{thm}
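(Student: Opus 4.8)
The plan is to realize the identity as a Lagrange-type interpolation formula produced by the residue theorem, with the spectral gap $\delta>0$ supplying exactly the decay needed to push the contour to infinity. I would fix $t\in\mathbb{R}$ (not among the $t_k$) and work with the meromorphic function
\[
F(z)=\frac{f(z)}{G(z)\,(z-t)} .
\]
Its only singularities are simple poles: at each zero $t_k$ of $G$, with residue $f(t_k)\big/\bigl(G'(t_k)(t_k-t)\bigr)$, and at $z=t$, with residue $f(t)/G(t)$. If I can exhibit contours $\Gamma_n$ that enclose the poles $t_{-n},\dots,t_n$ and $t$ and expand to infinity along which $\int_{\Gamma_n}F\to 0$, then the residue theorem gives
\[
\frac{f(t)}{G(t)}=\sum_{k\in\mathbb{Z}}\frac{f(t_k)}{G'(t_k)(t-t_k)},
\]
and multiplying through by $G(t)$ yields precisely the asserted expansion; uniform convergence on compact subsets of $\mathbb{R}$ then follows from the uniformity of the tail estimates in $t$.

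Next I would assemble the two growth bounds that render the contour integral negligible. Since $f\in\mathbf{B}_{\pi-\delta}^{\infty}(\mathbb{R})$, a Phragm\'en--Lindel\"of argument gives $|f(x+iy)|\le\|f\|_{\infty}\,e^{(\pi-\delta)|y|}$ throughout $\mathbb{C}$. Against this I must place a matching \emph{lower} bound for $G$: an estimate $|G(x+iy)|\ge c\,e^{\pi|y|}$ for $|y|$ large, together with $|G(x)|\ge c>0$ on the real line at a fixed distance from the zeros. Granting these, one finds $|F(z)|\le C\,e^{-\delta|y|}/|z|$ on the contour. I would then take $\Gamma_n$ to be rectangles (or circles of radii $R_n\to\infty$) arranged to stay a fixed distance from every $t_k$, which is possible because condition \eqref{Hig1} forces $\{t_k\}$ to be uniformly separated. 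The factor $e^{-\delta|y|}$ then kills the contributions of the horizontal sides while $1/|z|$ handles the vertical ones, so that $\int_{\Gamma_n}F\to0$ as required.

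The technical heart, and the step I expect to be the main obstacle, is proving that $G$ is a function of \emph{sine type} with the two-sided bounds just quoted. Here I would lean on the hypothesis $\sup_k|t_k-k|<1/4$ from \eqref{Hig1}: comparing the canonical product \eqref{Hig2} factor by factor with $\sin\pi z/\pi$, whose zeros are the integers, each displacement $|t_k-k|$ stays strictly below the critical threshold, which keeps the ratio $G(z)\big/(\sin\pi z/\pi)$ bounded above and below on contours avoiding the zeros. Concretely I would estimate $\log\bigl|G(z)\big/(\sin\pi z/\pi)\bigr|$ by summing the contributions $\log\bigl|(z-t_k)/(z-k)\bigr|$ over $k$, using $|t_k-k|<1/4$ together with the separation of the $t_k$ to show the sum is uniformly bounded off a neighborhood of the zeros. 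This simultaneously produces the off-axis growth $|G(x+iy)|\asymp e^{\pi|y|}$ and the on-axis lower bound. Once this sine-type estimate is established, the contour argument above closes and delivers the theorem.
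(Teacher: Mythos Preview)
The paper does not prove this theorem at all: it is quoted verbatim as a known result from Seip \cite{S} and used as a black box to derive the subsequent theorem about $\Phi$. There is therefore no ``paper's own proof'' to compare against; the author simply cites the literature.

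That said, your outline is the standard route to such Lagrange-type expansions and is essentially what one finds in Seip's paper and the surrounding literature (Levin's theory of sine-type functions). The residue-theorem reduction is correct, and the identification of the key technical input --- two-sided bounds $|G(x+iy)|\asymp e^{\pi|y|}$ away from the zeros --- is exactly right. Your plan to obtain these by comparing $G$ with $\sin\pi z/\pi$ via the factorwise ratio is workable; the constraint $\sup_k|t_k-k|<1/4$ is what keeps the logarithmic sum $\sum_k\log\bigl|(z-t_k)/(z-k)\bigr|$ bounded on suitable contours (Kadec-type estimates). One small refinement: on the vertical sides of the rectangle the bound $|F(z)|\le C e^{-\delta|y|}/|z|$ alone does not immediately give decay, since integrating $1/|z|$ over a vertical segment of length $\asymp R_n$ is only $O(\log R_n)$; you should instead use the full $e^{-\delta|y|}$ factor on the vertical sides as well, noting that $\int_{-\infty}^{\infty} e^{-\delta|y|}\,dy<\infty$ while the $1/|z-t|$ factor contributes an additional $1/R_n$, which together drive the vertical contribution to zero. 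With that adjustment the sketch closes.
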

This Theorem together with Theorem \ref{PW} imply the following theorem.
\begin{thm}\label{S}
 If $f\in \mathbf{B}_{\pi-\delta}^{p}(\Theta),\>0<\delta<\pi,1\leq p\leq \infty,$
  then 
 under assumptions and notations of Theorem \ref{Hig} 
  the following formula holds uniformly on  compact subsets of $\mathbb{C}$
$$
\Phi(t)=\sum_{k\in \mathbb{Z}}\Phi(t_{k})\frac{G(t)}{G^{'}(t_{k})(t-t_{k})},
$$
or
$$
\int_{\mathbb{R}_{+}}f(e^{t}x)g(x)\frac{dx}{x}=\sum_{k\in \mathbb{Z}}\left(\int_{\mathbb{R}_{+}}f(e^{t_{k}}x)g(x)\frac{dx}{x}\right)\frac{G(t)}{G^{'}(t_{k})(t-t_{k})},
$$
where $g\in X^{q}(\mathbb{R}_{+}),\>1/p+1/q=1, 1\leq p\leq \infty$.
\end{thm}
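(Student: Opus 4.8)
The plan is to reduce this vector-valued statement to the scalar Seip interpolation theorem (Theorem~\ref{S}, the regular version) through the Paley--Wiener characterization of Theorem~\ref{PW}, in direct analogy with the way the preceding Higgins-type theorem was reduced to Theorem~\ref{Hig}. The essential structural observation is that here one may work with $\Phi$ \emph{directly} rather than with the difference quotient $\Psi$. The Higgins theorem lives in the Hilbert space $\mathbf{B}_{\pi}^{2}(\mathbb{R})$, to which $\Phi$ does not belong (it is merely bounded on $\mathbb{R}$, not square-integrable), which is exactly what forced the passage to $\Psi\in\mathbf{B}_{\pi}^{r}(\mathbb{R})$ and hence the restriction $1<p<\infty$. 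By contrast, Seip's theorem is stated in $\mathbf{B}_{\pi-\delta}^{\infty}(\mathbb{R})$, which is precisely the output of Theorem~\ref{PW}; so no duality argument is needed and the full range $1\le p\le\infty$ becomes admissible.

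Concretely, I would fix $f\in\mathbf{B}_{\pi-\delta}^{p}(\Theta)$ and an arbitrary $g\in X^{q}(\mathbb{R}_{+})$ with $1/p+1/q=1$, and set
$$
\Phi(z)=\int_{\mathbb{R}_{+}}f(e^{z}x)g(x)\frac{dx}{x},\qquad z\in\mathbb{C}.
$$
By Theorem~\ref{PW} the function $\Phi$ is entire of exponential type $\pi-\delta$ and bounded on $\mathbb{R}$, i.e.\ $\Phi\in\mathbf{B}_{\pi-\delta}^{\infty}(\mathbb{R})$; writing $\sigma=\pi-\delta$ this meets the hypothesis of the scalar Seip theorem since $\pi-\sigma=\delta>0$. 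Applying that theorem to $h=\Phi$, with $\{t_{k}\}$ and $G$ as in Theorem~\ref{Hig}, yields
$$
\Phi(t)=\sum_{k\in\mathbb{Z}}\Phi(t_{k})\frac{G(t)}{G^{'}(t_{k})(t-t_{k})}.
$$
Writing out $\Phi(t)$ and $\Phi(t_{k})$ as Mellin integrals turns this into the second displayed identity of the statement; no interchange of sum and integral is required, because that integral form is literally the line above with the definition of $\Phi$ substituted in, and since $g$ was arbitrary the single identity already records the full family of weak equalities.

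The one delicate point, which I expect to be the main (if minor) obstacle, is the passage from convergence on compact subsets of $\mathbb{R}$ --- all that the quoted Seip theorem literally supplies --- to the convergence on compact subsets of $\mathbb{C}$ asserted here. Both sides are entire: $\Phi$ is entire of exponential type $\pi-\delta$, and each kernel $G(z)/\bigl(G^{'}(t_{k})(z-t_{k})\bigr)$ is entire. To upgrade the convergence I would estimate the canonical product $G$ on disks $\{|z|\le R\}$, using that the zeros $t_{k}$ stay within $1/4$ of the integers so that $G$ grows like $\sin\pi z$ up to bounded factors, and combine this with the Bernstein growth bound on $\Phi$ coming from Theorem~\ref{PW}; a normal-families (Vitali) argument then carries the locally uniform convergence off the real axis into a complex neighborhood. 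Everything else is a formal consequence of Theorems~\ref{PW} and~\ref{S}.
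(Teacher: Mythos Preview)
Your approach is correct and coincides with the paper's: the paper's entire argument is the one-line remark ``This Theorem together with Theorem~\ref{PW} imply the following theorem,'' i.e.\ exactly your reduction $\Phi\in\mathbf{B}_{\pi-\delta}^{\infty}(\mathbb{R})$ via Theorem~\ref{PW} followed by Seip's scalar result. You go further than the paper in flagging the $\mathbb{R}$-versus-$\mathbb{C}$ convergence discrepancy, which the paper does not address.
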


\section{Riesz-Boas interpolation formulas}\label{RB}

We introduce the following bounded operators in the spaces $X^{p}(\mathbb{R}_{+}),\>1\leq p\leq \infty.$ 
\begin{equation}\label{b1}
\mathcal{R}^{(2m-1)}(\sigma)f(x)=
$$
$$
\left(\frac{\sigma}{\pi}\right)^{2m-1}\sum_{k\in \mathbb{Z}}(-1)^{k+1}A_{m,k}
 f\left(e^{\frac{\pi}{\sigma}(k-1/2)}x\right),\>\> \>\>f\in X^{p}(\mathbb{R}_{+}), \>\>\sigma>0,\>\>\>m\in \mathbb{N},
\end{equation}
and
\begin{equation}\label{b2}
\mathcal{R}^{(2m)}(\sigma)f(x)=
$$
$$
\left(\frac{\sigma}{\pi}\right)^{2m}\sum_{k\in \mathbb{Z}}(-1)^{k+1}B_{m,k}
f\left(e^{\frac{\pi k}{\sigma}}x\right), \>\> \>\>f\in X^{p}(\mathbb{R}_{+}), \>\>\sigma>0,\>\>\>m\in \mathbb{N},
\end{equation}
where $A_{m,k}$ and $B_{m,k}$ are defined 
as 
\begin{equation}\label{A}
A_{m,k}=(-1)^{k+1}  sinc ^{(2m-1)}\left(\frac{1}{2}-k\right)=
$$
$$
\frac{(2m-1)!}{\pi(k-\frac{1}{2})^{2m}}\sum_{j=0}^{m-1}\frac{(-1)^{j}}{(2j)!}\left(\pi(k-\frac{1}{2})\right)^{2j},\>\>\>m\in \mathbb{N},
\end{equation}
for $k\in \mathbb{Z}$,
\begin{equation}\label{B}
B_{m,k}=(-1)^{k+1}  sinc ^{(2m)}(-k)=\frac{(2m)!}{\pi k^{2m+1}}\sum_{j=0}^{m-1}\frac{(-1)^{j}(\pi k)^{2j+1}}{(2j+1)!},\>\>\>m\in \mathbb{N},
\end{equation}
for $k\in \mathbb{Z}\setminus \{0\}$, 
and 
\begin{equation}\label{B0}
B_{m,0}=(-1)^{m+1} \frac{\pi^{2m}}{2m+1},\>\>\>m\in \mathbb{N}.
\end{equation}
Both series converge in $X^{p}(\mathbb{R}_{+}),\>1\leq p\leq \infty,$ and their sums are (see \cite{BSS2})
\begin{equation}
\left(\frac{\sigma}{\pi}\right)^{2m-1}\sum_{k\in \mathbb{Z}}\left|A_{m,k}\right|=\sigma^{2m-1},\>\>\>\>\>
 \left(\frac{\sigma}{\pi}\right)^{2m}\sum_{k\in \mathbb{Z}}\left|B_{m,k}\right|=\sigma^{2m}\label{id-2}.
 \end{equation}
 Since $\|f(e^{t}\cdot)\|_{X^{p}(\mathbb{R}_{+})}=\|f\|_{X^{p}(\mathbb{R}_{+})}$ it implies that 
 \begin{equation}\label{norms}
 \|\mathcal{R}^{(2m-1)}(\sigma)f\|_{X^{p}(\mathbb{R}_{+})}\leq \sigma^{2m-1}\|f\|_{X^{p}(\mathbb{R}_{+})},\
 $$
 $$
 \|\mathcal{R}^{(2m)}(\sigma)f\|_{X^{p}(\mathbb{R}_{+})}\leq \sigma^{2m}\|f\|_{X^{p}(\mathbb{R}_{+})},\>\>\>f\in X^{p}(\mathbb{R}_{+}). 
 \end{equation}
 \begin{thm}For $f\in X^{p}(\mathbb{R}_{+}),\>1\leq p\leq \infty,$ the next two conditions are equivalent:

\begin{enumerate}

\item  $f$ belongs to $\mathbf{B}_{\sigma}^{p}(\Theta), \>\>\sigma>0,\>1\leq p \leq \infty$,

\item the following Riesz-Boas-type interpolation formulas hold true for $r\in \mathbb{N}$

\begin{equation}\label{B1}
\Theta^{r}f=\mathcal{R}^{(r)}(\sigma)f,
\end{equation}
or explicitly
$$
\left(x\frac{d}{dx}\right)^{2m-1}f(x)=
\left(\frac{\sigma}{\pi}\right)^{2m-1}\sum_{k\in \mathbb{Z}}(-1)^{k+1}A_{m,k}
 f\left(e^{\frac{\pi}{\sigma}(k-1/2)}x\right),
$$
and
$$
\left(x\frac{d}{dx}\right)^{2m}f(x)=\left(\frac{\sigma}{\pi}\right)^{2m}\sum_{k\in \mathbb{Z}}(-1)^{k+1}B_{m,k}
f\left(e^{\frac{\pi k}{\sigma}}x\right),
$$
\end{enumerate}
where each of the serious converges absolutely and uniformly on $\mathbb{R}_{+}$.
\end{thm}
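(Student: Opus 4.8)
The plan is to derive both implications by pushing everything, through the Paley--Wiener transference of Theorem~\ref{PW}, down to the classical Bernstein space $\mathbf{B}_{\sigma}^{\infty}(\mathbb{R})$, where the higher-order Riesz--Boas formula of \cite{BSS2} is already at hand. The implication $(2)\Rightarrow(1)$ is the short half: once the identities $\Theta^{r}f=\mathcal{R}^{(r)}(\sigma)f$ are granted for every $r\in\mathbb{N}$ (which in particular forces $\Theta^{r}f\in X^{p}(\mathbb{R}_{+})$, so that $f\in\mathcal{D}^{\infty}(\Theta)$), the operator bounds (\ref{norms}) give $\|\Theta^{r}f\|_{X^{p}(\mathbb{R}_{+})}=\|\mathcal{R}^{(r)}(\sigma)f\|_{X^{p}(\mathbb{R}_{+})}\le\sigma^{r}\|f\|_{X^{p}(\mathbb{R}_{+})}$ for all $r$, which is exactly the Bernstein inequality (\ref{Bern}) defining $\mathbf{B}_{\sigma}^{p}(\Theta)$.

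For the substantive implication $(1)\Rightarrow(2)$ I would fix $f\in\mathbf{B}_{\sigma}^{p}(\Theta)$ and an arbitrary $g\in X^{q}(\mathbb{R}_{+})$, $1/p+1/q=1$, and set $\Phi(t)=\int_{\mathbb{R}_{+}}f(e^{t}x)g(x)\frac{dx}{x}$, which lies in $\mathbf{B}_{\sigma}^{\infty}(\mathbb{R})$ by Theorem~\ref{PW}. The bridge to $\Theta$ is the differentiation identity $\left(\frac{d}{dt}\right)^{r}f(e^{t}x)\big|_{t=0}=\Theta^{r}f(x)$ already exploited in Theorem~\ref{basic}: it yields $\Phi^{(r)}(0)=\int_{\mathbb{R}_{+}}\Theta^{r}f(x)g(x)\frac{dx}{x}$, while the sampled values are $\Phi\!\left(\frac{\pi}{\sigma}(k-\tfrac12)\right)=\int_{\mathbb{R}_{+}}f\!\left(e^{\frac{\pi}{\sigma}(k-1/2)}x\right)g(x)\frac{dx}{x}$ and $\Phi\!\left(\frac{\pi k}{\sigma}\right)=\int_{\mathbb{R}_{+}}f\!\left(e^{\frac{\pi k}{\sigma}}x\right)g(x)\frac{dx}{x}$. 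Applying the scalar Riesz--Boas formula to $\Phi\in\mathbf{B}_{\sigma}^{\infty}(\mathbb{R})$ and evaluating at $t=0$ produces, with the same coefficients $A_{m,k}$ and $B_{m,k}$ of (\ref{A}) and (\ref{B}), the identities $\Phi^{(2m-1)}(0)=\left(\frac{\sigma}{\pi}\right)^{2m-1}\sum_{k}(-1)^{k+1}A_{m,k}\,\Phi\!\left(\frac{\pi}{\sigma}(k-\tfrac12)\right)$ and the even-order analogue with $B_{m,k}$ and shifts $\frac{\pi k}{\sigma}$.

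The next step is to read these scalar identities back inside the integral. Since the series defining $\mathcal{R}^{(2m-1)}(\sigma)$ and $\mathcal{R}^{(2m)}(\sigma)$ in (\ref{b1}) and (\ref{b2}) converge in the norm of $X^{p}(\mathbb{R}_{+})$ --- precisely the content of the summability estimates (\ref{id-2}) together with the isometry $\|f(e^{t}\cdot)\|_{X^{p}(\mathbb{R}_{+})}=\|f\|_{X^{p}(\mathbb{R}_{+})}$ --- the functional $g$ may be interchanged with the summation, turning the right-hand sides into $\int_{\mathbb{R}_{+}}\mathcal{R}^{(r)}(\sigma)f(x)\,g(x)\frac{dx}{x}$. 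Thus $\int_{\mathbb{R}_{+}}\Theta^{r}f\cdot g\,\frac{dx}{x}=\int_{\mathbb{R}_{+}}\mathcal{R}^{(r)}(\sigma)f\cdot g\,\frac{dx}{x}$ for every $g\in X^{q}(\mathbb{R}_{+})$. Under the substitution $u=f(\cdot)(\cdot)^{-1/p}$, $v=g(\cdot)(\cdot)^{-1/q}$ this pairing becomes the ordinary $\int_{0}^{\infty}uv\,dx$, and since $L^{q}$ separates $L^{p}$ for every $1\le p\le\infty$ (by reflexivity when $1<p<\infty$, and because $L^{1}$ separates $L^{\infty}$ and conversely at the endpoints), I may cancel $g$ to obtain $\Theta^{r}f=\mathcal{R}^{(r)}(\sigma)f$, i.e.\ (\ref{B1}); the absolute and uniform convergence on $\mathbb{R}_{+}$ is again supplied by (\ref{id-2}).

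I expect the main obstacle to be exactly this last transfer from the weak, tested-against-$g$ equality to the genuine operator identity: justifying that the $X^{p}$-valued Riesz--Boas series may be paired termwise with $g$, and that $g$ may then be removed. The termwise pairing is licensed by the norm convergence of the defining series, and the cancellation by the separation argument above, which needs a little extra care at the endpoints $p=1$ and $p=\infty$, where $X^{q}(\mathbb{R}_{+})$ is not the full dual but still separates points. Apart from this, the whole argument is a transcription of the classical scalar Riesz--Boas identity through the transference already codified in Theorem~\ref{PW}, so I do not anticipate further analytic difficulties.
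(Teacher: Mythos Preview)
Your proposal is correct and follows essentially the same route as the paper: for $(1)\Rightarrow(2)$ you pass to $\Phi(t)=\int_{\mathbb{R}_{+}}f(e^{t}x)g(x)\frac{dx}{x}\in\mathbf{B}_{\sigma}^{\infty}(\mathbb{R})$ via Theorem~\ref{PW}, apply the scalar Riesz--Boas formulas of \cite{BSS2}, use the $X^{p}$-norm convergence of the series (\ref{b1})--(\ref{b2}) to pull $g$ outside the sum, and then strip $g$ by duality; for $(2)\Rightarrow(1)$ you invoke the bounds (\ref{norms}). The only difference is cosmetic (the paper keeps a free $t$ and sets $t=0$ at the end, and writes the duality step only for $1<p<\infty$, whereas you explicitly note the separation argument at the endpoints $p=1,\infty$).
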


\begin{proof}
We are proving that (1)$\rightarrow$ (2). According to Theorem \ref{PW}, if $f\in \mathbf{B}_{\sigma}^{p}(\Theta), \>\>\sigma>0,\>1<p<\infty$, then the function 
 $
\Phi(t)=\int_{\mathbb{R}_{+}}f(e^{t}x)g(x)\frac{dx}{x}
$
for any $g\in X^{q}(\mathbb{R}_{+}),\>1/p+1/q=1,$ 
belongs to $ {\bf B}_{\sigma}^{\infty}(\mathbb{R}).$ Thus by  \cite{BSS2} we have
$$
\Phi^{(2m-1)}(t)=\left(\frac{\sigma}{\pi}\right)^{2m-1}\sum_{k\in \mathbb{Z}}(-1)^{k+1}A_{m,k}\Phi\left(t+\frac{\pi}{\sigma}(k-1/2)    \right),\>\>\>m\in \mathbb{N},
$$
$$
\Phi^{(2m)}(t)=\left(\frac{\sigma}{\pi}\right)^{2m}\sum_{k\in \mathbb{Z}}(-1)^{k+1}B_{m,k}\Phi\left(t+\frac{\pi k}{\sigma}    \right),\>\>\>m\in \mathbb{N}.
$$
Together with 
$$
\left(\frac{d}{dt}\right)^{k}\Phi(t)=\int_{\mathbb{R}_{+}}\Theta f(e^{t}x)g(x)\frac{dx}{x},
$$
  it shows
$$
\int_{\mathbb{R}_{+}}\Theta^{2m-1} f(e^{t}x)g(x)\frac{dx}{x}=
$$
$$
\left(\frac{\sigma}{\pi}\right)^{2m-1}\sum_{k\in \mathbb{Z}}(-1)^{k+1}A_{m,k}\int_{\mathbb{R}_{+}} f\left(e^{\left(t+\frac{\pi}{\sigma}(k-1/2)\right)}x\right)g(x)\frac{dx}{x},\>\>\>m\in \mathbb{N},
$$
and also
$$
\int_{\mathbb{R}_{+}}\Theta^{2m} f(e^{t}x)g(x)\frac{dx}{x}=\left(\frac{\sigma}{\pi}\right)^{2m}\sum_{k\in \mathbb{Z}}(-1)^{k+1}B_{m,k}\int_{\mathbb{R}_{+}} f\left(e^{\left(t+\frac{\pi k}{\sigma}\right)}x\right)g(x)\frac{dx}{x}             ,\>\>\>m\in \mathbb{N}.
$$
Since both series (\ref{b1}) and (\ref{b2}) converge in $X^{p}(\mathbb{R}_{+})$ and the last two equalities hold for any $g\in X^{q}(\mathbb{R}_{+}),\>1/p+1/q=1,$ we obtain the next two formulas
\begin{equation}\label{sam1}
\Theta^{2m-1}f(e^{t}x)=
\left(\frac{\sigma}{\pi}\right)^{2m-1}\sum_{k\in \mathbb{Z}}(-1)^{k+1}A_{m,k}f\left(e^{\left(t+\frac{\pi}{\sigma}(k-1/2)\right)}x\right)           ,\>\>\>m\in \mathbb{N},
\end{equation}
\begin{equation}\label{sam2}
\Theta^{2m}f(e^{t}x)=\left(\frac{\sigma}{\pi}\right)^{2m}\sum_{k\in \mathbb{Z}}(-1)^{k+1}B_{m,k}
f\left(e^{\left(t+\frac{\pi k}{\sigma}\right)}x\right),\>\>\>m\in \mathbb{N}.
\end{equation}
In turn, when $t=0$ these formulas become formulas (\ref{B1}). 
The fact that (2) $\rightarrow$ (1) easily follows from  the formulas (\ref{B1})
and (\ref{norms}).
  
Theorem is proved.
\end{proof}

\begin{col}
If $f$ belongs to  $\mathbf{B}_{\sigma}^{p}(\Theta), 1<p<\infty,$ then for any $\sigma_{1}\geq \sigma,\>\>\>\sigma_{2}\geq \sigma$ one has 
\begin{equation}\label{sigmaB}
\mathcal{R}^{(r)}(\sigma_{1})f=\mathcal{R}^{(r)}(\sigma_{2})f,\>\>\>r\in \mathbb{N}.
\end{equation}
\end{col}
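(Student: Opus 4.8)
The plan is to reduce the claimed equality to the Riesz--Boas representation of the preceding theorem, exploiting the monotonicity of the Bernstein scale in $\sigma$. The central observation is that the family $\mathbf{B}_{\sigma}^{p}(\Theta)$ is nested: if $f\in\mathbf{B}_{\sigma}^{p}(\Theta)$ and $\sigma'\geq\sigma$, then $f\in\mathbf{B}_{\sigma'}^{p}(\Theta)$ as well. This follows at once from the defining inequality (\ref{Bern}), since $\|\Theta^{k}f\|_{X^{p}(\mathbb{R}_{+})}\leq\sigma^{k}\|f\|_{X^{p}(\mathbb{R}_{+})}\leq(\sigma')^{k}\|f\|_{X^{p}(\mathbb{R}_{+})}$ for every $k\in\mathbb{N}$ whenever $\sigma'\geq\sigma$. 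So first I would record this nesting, and note that, by hypothesis, both $\sigma_{1}\geq\sigma$ and $\sigma_{2}\geq\sigma$ are admissible band-widths for $f$, that is, $f\in\mathbf{B}_{\sigma_{1}}^{p}(\Theta)\cap\mathbf{B}_{\sigma_{2}}^{p}(\Theta)$.

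Next I would invoke the Riesz--Boas interpolation formula (\ref{B1}) of the preceding theorem, which is valid for any band-width for which $f$ belongs to the corresponding Bernstein space. Applying it successively with $\sigma_{1}$ and with $\sigma_{2}$ in place of $\sigma$ gives $\Theta^{r}f=\mathcal{R}^{(r)}(\sigma_{1})f$ and $\Theta^{r}f=\mathcal{R}^{(r)}(\sigma_{2})f$ for every $r\in\mathbb{N}$. Since the left-hand side $\Theta^{r}f$ is the same operator applied to the same function and is intrinsic to $f$, independent of the chosen sampling scale, comparing the two identities yields $\mathcal{R}^{(r)}(\sigma_{1})f=\mathcal{R}^{(r)}(\sigma_{2})f$, which is exactly (\ref{sigmaB}).

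There is essentially no hard step here: the entire content is the monotonicity of the Bernstein condition in $\sigma$ together with the fact that the element $\Theta^{r}f$ produced by the interpolation formula does not depend on the sampling scale. The only point deserving a line of care is checking that the representation of the previous theorem is genuinely applicable for every $\sigma'\geq\sigma$, but this is immediate from the nesting, so I do not anticipate any real obstacle.
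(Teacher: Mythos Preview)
Your proposal is correct and matches the paper's intent precisely: the corollary is stated without proof, and the obvious justification is exactly the one you give---the nesting $\mathbf{B}_{\sigma}^{p}(\Theta)\subset\mathbf{B}_{\sigma'}^{p}(\Theta)$ for $\sigma'\geq\sigma$ (immediate from (\ref{Bern})) together with two applications of (\ref{B1}).
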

Let us introduce the  notation
 $$
  \mathcal{R}(\sigma)= \mathcal{R}^{(1)}(\sigma).
 $$
 One has the following "power" formula which  follows from the fact that operators $
  \mathcal{R}(\sigma)$ and $\Theta$ commute on any $\mathbf{B}_{\sigma}^{p}(\Theta)$.
 \begin{col}
 For any $r\in \mathbb{N}$ and  any $f\in \mathbf{B}_{\sigma}^{p}(\Theta), 1<p<\infty,$
 \begin{equation}\label{powerB}
 \Theta^{r}f=\mathcal{R}^{(r)}(\sigma)f=\mathcal{R}^{r}(\sigma)f,
 \end{equation}
 where $\mathcal{R}^{r}(\sigma)f=\mathcal{R}(\sigma)\left(...\left(\mathcal{R}(\sigma)\right)\right)f.$
\end{col}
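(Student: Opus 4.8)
The plan is to reduce the statement to the case $r=1$ of the preceding theorem, namely the identity $\Theta f=\mathcal{R}(\sigma)f$ contained in (\ref{B1}), and then to iterate it. The first equality $\Theta^{r}f=\mathcal{R}^{(r)}(\sigma)f$ in (\ref{powerB}) is exactly (\ref{B1}) and needs no separate argument; the whole content of the corollary is the second equality $\mathcal{R}^{(r)}(\sigma)f=\mathcal{R}^{r}(\sigma)f$.

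The hard part, and essentially the only nontrivial step, is to show that $\mathbf{B}_{\sigma}^{p}(\Theta)$ is invariant under $\Theta$, i.e. that $f\in\mathbf{B}_{\sigma}^{p}(\Theta)$ forces $\Theta f\in\mathbf{B}_{\sigma}^{p}(\Theta)$. I would establish this through the Paley-Wiener characterization of Theorem \ref{PW} rather than from the defining inequality (\ref{Bern}), because the latter only yields $\|\Theta^{k}(\Theta f)\|=\|\Theta^{k+1}f\|\leq\sigma^{k+1}\|f\|$, which is the bound $\sigma^{k+1}\|f\|$ and not the required $\sigma^{k}\|\Theta f\|$. For each $g\in X^{q}(\mathbb{R}_{+})$, the function of Theorem \ref{PW} attached to $\Theta f$ is
$$
\int_{\mathbb{R}_{+}}(\Theta f)(e^{t}x)\,g(x)\frac{dx}{x}=\frac{d}{dt}\int_{\mathbb{R}_{+}}f(e^{t}x)\,g(x)\frac{dx}{x}=\Phi'(t),
$$
where $\Phi$ is the function associated with $f$. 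Since $\Phi\in\mathbf{B}_{\sigma}^{\infty}(\mathbb{R})$, the classical Bernstein inequality shows $\Phi'$ is again entire of exponential type $\sigma$ and bounded on $\mathbb{R}$, so $\Phi'\in\mathbf{B}_{\sigma}^{\infty}(\mathbb{R})$. By Theorem \ref{PW} this means $\Theta f\in\mathbf{B}_{\sigma}^{p}(\Theta)$, and by iteration $\Theta^{j}f\in\mathbf{B}_{\sigma}^{p}(\Theta)$ for every $j$.

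With invariance available, I would finish by induction on $r$. The case $r=1$ is the definition $\mathcal{R}^{(1)}(\sigma)=\mathcal{R}(\sigma)$. Assuming $\mathcal{R}^{(r)}(\sigma)f=\mathcal{R}^{r}(\sigma)f$ for all $f\in\mathbf{B}_{\sigma}^{p}(\Theta)$, I apply (\ref{B1}) with exponent $1$ to the function $\Theta^{r}f$, which is legitimate since $\Theta^{r}f\in\mathbf{B}_{\sigma}^{p}(\Theta)$ by the invariance just proved, obtaining $\Theta(\Theta^{r}f)=\mathcal{R}(\sigma)(\Theta^{r}f)$. Rewriting $\Theta^{r}f=\mathcal{R}^{(r)}(\sigma)f$ by (\ref{B1}) and then using the inductive hypothesis gives
$$
\Theta^{r+1}f=\mathcal{R}(\sigma)\bigl(\mathcal{R}^{(r)}(\sigma)f\bigr)=\mathcal{R}(\sigma)\bigl(\mathcal{R}^{r}(\sigma)f\bigr)=\mathcal{R}^{r+1}(\sigma)f,
$$
and since the left side is $\mathcal{R}^{(r+1)}(\sigma)f$ by (\ref{B1}), the induction closes. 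This also makes the commutation mentioned before the statement transparent: on $\mathbf{B}_{\sigma}^{p}(\Theta)$ the operator $\mathcal{R}(\sigma)$ acts as $\Theta$ and maps the space into itself, so $\mathcal{R}(\sigma)\Theta f=\Theta^{2}f=\Theta\mathcal{R}(\sigma)f$. I expect the $\Theta$-invariance of $\mathbf{B}_{\sigma}^{p}(\Theta)$ to be the only real obstacle; the rest is formal iteration of the already proved formula (\ref{B1}).
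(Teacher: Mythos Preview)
Your argument is correct and in substance coincides with the paper's: both reduce the corollary to iterating the $r=1$ case of the preceding theorem, the paper packaging the iteration as ``$\mathcal{R}(\sigma)$ and $\Theta$ commute on $\mathbf{B}_{\sigma}^{p}(\Theta)$'' while you justify it via $\Theta$-invariance of $\mathbf{B}_{\sigma}^{p}(\Theta)$ (and then observe that commutation follows). One small correction: your remark that the defining Bernstein inequality fails to give invariance overlooks Theorem~\ref{basic}. From $\|\Theta^{k}(\Theta f)\|=\|\Theta^{k+1}f\|\le\sigma^{k+1}\|f\|$ you get $\sup_{k}\sigma^{-k}\|\Theta^{k}(\Theta f)\|\le\sigma\|f\|<\infty$, and Theorem~\ref{basic} then yields $\Theta f\in\mathbf{B}_{\sigma}^{p}(\Theta)$ directly, so the detour through Theorem~\ref{PW} is not needed (though it is also valid).
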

Let us introduce the following notations
$$\mathcal{R}^{(2m-1)}(\sigma,N)f(x)=
\left(\frac{\sigma}{\pi}\right)^{2m-1}\sum_{|k|\leq N}(-1)^{k+1}A_{m,k}f\left(e^{\frac{\pi}{\sigma}(k-1/2)}x\right),
$$
$$
\mathcal{R}^{(2m)}(\sigma, N)f(x)=
\left(\frac{\sigma}{\pi}\right)^{2m}\sum_{|k|\leq N}(-1)^{k+1}B_{m,k}f\left(e^{\frac{\pi k}{\sigma}}x\right).
$$
One obviously has the following set of approximate Riesz-Boas-type formulas. 
\begin{thm}
If $f\in \mathbf{B}_{\sigma}^{p}(\Theta), 1<p<\infty,$ and $r\in \mathbb{N}$ then
\begin{equation}\label{appB}
\Theta^{r}f\approx\mathcal{R}^{(r)}(\sigma, N)f+O(N^{-2}),
\end{equation}
or explicitly
$$
\left(x\frac{d}{dx}\right)^{2m-1}f(x)\approx
\left(\frac{\sigma}{\pi}\right)^{2m-1}\sum_{|k|\leq N}(-1)^{k+1}A_{m,k}
 f\left(e^{\frac{\pi}{\sigma}(k-1/2)}x\right)+O(N^{-2}),
$$
and
$$
\left(x\frac{d}{dx}\right)^{2m}f(x)\approx\left(\frac{\sigma}{\pi}\right)^{2m}\sum_{|k|\leq N}(-1)^{k+1}B_{m,k}
f\left(e^{\frac{\pi k}{\sigma}}x\right)+O(N^{-2}).
$$

\end{thm}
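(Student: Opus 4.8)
The plan is to read the claim as an estimate for the $X^{p}(\mathbb{R}_{+})$-norm of the truncation error and to reduce it to a tail estimate for the exact formula. By the preceding exact Riesz--Boas identity (\ref{B1}), for $f\in\mathbf{B}_{\sigma}^{p}(\Theta)$ with $1<p<\infty$ one has $\Theta^{r}f=\mathcal{R}^{(r)}(\sigma)f$, i.e. the full series (\ref{b1}) or (\ref{b2}). Subtracting the truncated operator, the error is exactly the tail
$$
\Theta^{r}f-\mathcal{R}^{(r)}(\sigma,N)f=\left(\frac{\sigma}{\pi}\right)^{r}\sum_{|k|>N}(-1)^{k+1}C_{m,k}\,f\!\left(e^{\frac{\pi}{\sigma}\xi_{k}}x\right),
$$
where $(C_{m,k},\xi_{k})=(A_{m,k},k-1/2)$ in the odd case $r=2m-1$ and $(C_{m,k},\xi_{k})=(B_{m,k},k)$ in the even case $r=2m$. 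So everything comes down to estimating this tail.

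First I would record the two ingredients that make the tail small. The Mellin translations (\ref{U}) are isometries, $\|f(e^{t}\cdot)\|_{X^{p}(\mathbb{R}_{+})}=\|f\|_{X^{p}(\mathbb{R}_{+})}$, so every sample has the common norm $\|f\|_{X^{p}(\mathbb{R}_{+})}$; and from the explicit expressions (\ref{A}), (\ref{B}) the coefficients decay quadratically, $A_{m,k}=O(|k|^{-2})$ and $B_{m,k}=O(|k|^{-2})$ as $|k|\to\infty$ (a polynomial factor of degree $2m-2$, resp. $2m-1$, against $|k|^{-2m}$, resp. $|k|^{-2m-1}$). The crude triangle-inequality bound, using the summability already guaranteed by (\ref{id-2}), then yields only
$$
\left\|\Theta^{r}f-\mathcal{R}^{(r)}(\sigma,N)f\right\|_{X^{p}(\mathbb{R}_{+})}\le\left(\frac{\sigma}{\pi}\right)^{r}\|f\|_{X^{p}(\mathbb{R}_{+})}\sum_{|k|>N}|C_{m,k}|=O(N^{-1}),
$$
since $\sum_{|k|>N}|k|^{-2}=O(N^{-1})$.

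To sharpen $O(N^{-1})$ to the claimed $O(N^{-2})$ I would exploit the alternating factor $(-1)^{k+1}$, which the absolute bound discards. Writing $C_{m,k}=c_{m}\,\xi_{k}^{-2}+O(\xi_{k}^{-4})$ with the regular second-order profile read off from (\ref{A})--(\ref{B}), the remainder is absolutely summable and contributes $O(N^{-3})$. For the leading part $L_{N}=\sum_{|k|>N}(-1)^{k+1}\xi_{k}^{-2}f(e^{\frac{\pi}{\sigma}\xi_{k}}x)$, treated separately on the two half-tails $k>N$ and $k<-N$, I would apply summation by parts: the partial sums of $(-1)^{k+1}$ are bounded, the first difference of the smooth weight $\xi_{k}^{-2}$ is $O(|k|^{-3})$ (summing to $O(N^{-2})$), and consecutive samples are tied by the fixed isometry $f(e^{\frac{\pi}{\sigma}\xi_{k+1}}x)=U(\pi/\sigma)f(e^{\frac{\pi}{\sigma}\xi_{k}}x)$. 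A second summation by parts, now against the forward difference of the samples, returns an alternating tail of exactly the original type, producing a self-referential relation of the shape $L_{N}=-L_{N}+O(N^{-2})$, whence $L_{N}=O(N^{-2})$. An equivalent and perhaps cleaner route is to transfer the corresponding classical truncation estimate for the Riesz--Boas formulas of \cite{BSS2}, applied to $\Phi(t)=\int_{\mathbb{R}_{+}}f(e^{t}x)g(x)\frac{dx}{x}\in\mathbf{B}_{\sigma}^{\infty}(\mathbb{R})$, and then to remove the auxiliary $g\in X^{q}(\mathbb{R}_{+})$ by the same weak/duality argument ($1<p<\infty$) used in the preceding proof.

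The main obstacle is precisely the passage from $O(N^{-1})$ to $O(N^{-2})$: the isometry bound is blind to cancellation, so one must retain the signs $(-1)^{k+1}$ and combine them with the regular quadratic decay of $A_{m,k}$, $B_{m,k}$ in the summation-by-parts argument, checking that the boundary terms vanish in the limit and that the transformed tails close up to gain the extra power of $N$. The restriction $1<p<\infty$ enters only through the exact formula (\ref{B1}) on which the estimate rests.
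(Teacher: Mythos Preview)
The paper offers no proof here: the theorem is introduced by the single sentence ``One obviously has the following set of approximate Riesz--Boas-type formulas,'' with no further argument. So there is nothing substantive to compare against.

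Your reduction to the tail of the exact identity (\ref{B1}) and the absolute bound
\[
\bigl\|\Theta^{r}f-\mathcal{R}^{(r)}(\sigma,N)f\bigr\|_{X^{p}(\mathbb{R}_{+})}\le\Bigl(\frac{\sigma}{\pi}\Bigr)^{r}\|f\|_{X^{p}(\mathbb{R}_{+})}\sum_{|k|>N}|C_{m,k}|=O(N^{-1})
\]
are correct and are presumably what ``obviously'' is meant to convey, via $|A_{m,k}|,|B_{m,k}|=O(k^{-2})$ from (\ref{A})--(\ref{B}) together with the isometry of $U$.

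Your attempt to upgrade $O(N^{-1})$ to the stated $O(N^{-2})$ goes beyond anything the paper supplies, but as written the summation-by-parts step does not close. After one Abel summation the surviving piece is, up to $O(N^{-2})$ terms,
\[
\sum_{k>N}S_{k}\,\frac{f_{k+1}-f_{k}}{(k+1)^{2}},\qquad S_{k}=\sum_{j=N+1}^{k}(-1)^{j+1}\in\{0,\pm1\},
\]
which runs over every second index with \emph{no} remaining alternation; since $\|f_{k+1}-f_{k}\|_{X^{p}}=\|(U(\pi/\sigma)-I)f\|_{X^{p}}$ is a fixed nonzero constant independent of $k$, this residual is again of order $\sum_{k>N}k^{-2}=O(N^{-1})$. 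A second Abel step cannot regenerate the alternating structure of $L_{N}$, so the asserted relation $L_{N}=-L_{N}+O(N^{-2})$ is unsupported (and the same obstruction appears if one simply pairs consecutive terms). In short, the paper provides no argument for the exponent $-2$, and neither does your proposal; only $O(N^{-1})$ follows from the tools at hand.
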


\end{document}